\newtheorem{theorem}{Theorem}[section]
\newtheorem{corollary}{Corollary}
\newtheorem{lemma}[theorem]{Lemma}
\newtheorem{proposition}{Proposition}
\theoremstyle{definition}
\newcommand{\R}{{\mathbb R}}
\renewcommand{\S}{{\mathbb S}}
\newcommand{\be}[1]{\begin{equation}\label{#1}}
\newcommand{\ee}{\end{equation}}
\renewcommand{\(}{\left(}
\renewcommand{\)}{\right)}
\newcommand{\iS}[1]{\int_{\S^1}{#1}\,d\sigma}
\newcommand{\nrmS}[2]{\|{#1}\|_{\mathrm L^{#2}(\S^1)}}
\newcommand{\nrmT}[2]{\|{#1}\|_{\mathrm L^{#2}(0,T)}}
\renewcommand{\S}{\mathbb{S}}
\newcommand{\Lp}{\mathcal L_p\kern1pt}
\newcommand{\ez}{z}
\definecolor{darkgreen}{rgb}{0.2,0.7,0.1}
\title[Interpolation, $p$-Laplacian and rigidity]{Interpolation inequalities in $\mathrm W^{1,p}(\S^1)$\\
and \emph{carr\'e du champ} methods}
\author[J.~Dolbeault, M.~Garc\'ia-Huidobro, and R.~Man\'asevich]{}
\subjclass{Primary 35J92; 35K92. Secondary: 49K15; 58J35}
\keywords{Interpolation; Gagliardo-Nirenberg inequalities; bifurcation; branches of solutions; elliptic equations; $p$-Laplacian; entropy; Fisher information; carr\'e du champ method; Poincar\'e inequality; rigidity; uniqueness; rescaling; period; nonlinear Keller-Lieb-Thirring energy estimates.}
\email{dolbeaul@ceremade.dauphine.fr}
\email{mgarcia@mat.puc.cl}
\email{manasevi@dim.uchile.cl}
\thanks{$^*$ Corresponding author: Jean Dolbeault}
\begin{document}

\maketitle
\date{\today}
\thispagestyle{empty}

\medskip\centerline{\scshape Jean Dolbeault$\,^*$}
\smallskip{\footnotesize
\centerline{CEREMADE (CNRS UMR n$^\circ$ 7534)}
\centerline{PSL university, Universit\'e Paris-Dauphine}
\centerline{Place de Lattre de Tassigny, 75775 Paris 16, France}}

\medskip\centerline{\scshape Marta Garc\'{i}a-Huidobro}
\smallskip{\footnotesize
\centerline{Departamento de Matem\'{a}ticas}
\centerline{Pontificia Universidad Cat\'{o}lica de Chile}
\centerline{Casilla 306, Correo 22, Santiago de Chile, Chile}}

\medskip\centerline{\scshape R\'aul Man\'asevich}
\smallskip{\footnotesize
\centerline{DIM \& CMM (UMR CNRS n$^\circ$ 2071)}
\centerline{FCFM, Universidad de Chile}
\centerline{Casilla 170 Correo 3, Santiago, Chile}}
\bigskip

\begin{abstract} This paper is devoted to an extension of \emph{rigidity results} for nonlinear differential equations, based on \emph{carr\'e du champ} methods, in the one-dimensional periodic case. The main result is an interpolation inequality with non-trivial explicit estimates of the constants in $\mathrm W^{1,p}(\S^1)$ with $p\ge2$. Mostly for numerical reasons, we relate our estimates with issues concerning periodic dynamical systems. Our interpolation inequalities have a dual formulation in terms of generalized spectral estimates of Keller-Lieb-Thirring type, where the differential operator is now a $p$-Laplacian type operator. It is remarkable that the \emph{carr\'e du champ} method adapts to such a nonlinear framework, but significant changes have to be done and, for instance, the underlying parabolic equation has a nonlocal term whenever $p\neq2$. \end{abstract}

\section{Introduction}\label{Sec:Intro}

This paper is a generalization to $p\neq2$ of results which have been established in~\cite{MR1134481} in the case $p=2$ and go back to~\cite{MR615628}. On the other hand, we use a flow interpretation which was developed in~\cite{DEKL} and relies on the \emph{carr\'e du champ method}. This second approach gives similar results and can be traced back to~\cite{MR889476,MR772092}. As far as we know, Bakry-Emery techniques have been used in the context of the $p$-Laplacian operator to produce estimates of the first eigenvalue but neither for non-linear interpolation inequalities nor for estimates on non-linear $p$-Laplacian flows. By mixing the two approaches, we are able not only to establish inequalities with accurate estimates of the constants but we also obtain \emph{improved inequalities} and get \emph{quantitative rates of convergence} for a nonlinear semigroup associated with the $p$-Laplacian. We also establish \emph{improved rates of convergence}, at least as long as the solution does not enter the asymptotic regime. Results are similar to those of~\cite{Dolbeault_2017} in the case $p=2$.

Let us denote by $\S^1$ the unit circle which is identified with $[0,2\pi)$, with periodic boundary conditions and by $d\sigma=\frac{dx}{2\pi}$ the uniform probability measure on $\S^1$. We define
\[
\lambda_1^\star:=\inf_{v\in\mathcal W_1}\frac{\nrmS{v'}p^2}{\nrmS v2^2}\quad\mbox{and}\quad\lambda_1:=\inf_{v\in\mathcal W_1}\frac{\nrmS{v'}p^2}{\nrmS vp^2}
\]
where the infimum is taken on the set $\mathcal W_1$ of all functions $v$ in $\mathrm W^{1,p}(\S^1)\setminus\{0\}$ such that $\iS v=0$. Here we use the notation:
\[
\nrmS up:=\(\iS{|u|^p}\)^{1/p}\,.
\]
With our notations, $\lambda_1^{p/2}$ is the lowest positive eigenvalue of the $p$-Laplacian operator $\Lp$ defined by
\[
-\,\Lp v:=-\(|v'|^{p-2}\,v'\)'\,.
\]
Since $d\sigma$ is a probability measure, then $\nrmS up^2-\nrmS u2^2$ has the same sign as $(p-2)$, so that
\[
(p-2)\,\big(\lambda_1^\star-\lambda_1\big)\ge0\,.
\]
See Appendix~\ref{Sec:Appendix:Inequalities} for further considerations. Our main result goes as follows.
\begin{theorem}\label{Thm:Main1} Assume that $p\in(2,+\infty)$ and $q>p-1$. There exists $\Lambda_{p,q}>0$ such that for any function $u\in\mathrm W^{1,p}(\S^1)$, the following inequalities hold:
\be{Interp}
\nrmS{u'}p^2\ge\frac{\Lambda_{p,q}}{p-q}\(\nrmS up^2-\nrmS uq^2\)
\ee
if $p\neq q$, and
\be{LogSob}
\nrmS{u'}p^2\ge\frac2p\,\Lambda_{p,p}\,\nrmS up^{2-p}\iS{|u|^p\,\log\(\frac{|u|}{\nrmS up}\)}
\ee
if $p=q$ and $u\not\equiv0$. Moreover, the sharp constant $\Lambda_{p,q}$ in~\eqref{Interp} and~\eqref{LogSob} is such that
\[
\lambda_1\le\Lambda_{p,q}\le\lambda_1^\star\,.
\]
\end{theorem}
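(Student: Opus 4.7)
It suffices to prove~\eqref{Interp}, since the logarithmic form~\eqref{LogSob} is obtained by differentiating $q\mapsto\nrmS uq^2$ at $q=p$ and passing to the limit $q\to p$ in~\eqref{Interp}. The two-sided bound on $\Lambda_{p,q}$ is proved in opposite directions: the upper bound $\Lambda_{p,q}\le\lambda_1^\star$ by linearization around constants, and the lower bound $\Lambda_{p,q}\ge\lambda_1$ by a nonlinear \emph{carré du champ} method along a $p$-Laplacian parabolic flow (with a nonlocal correction, as announced in the abstract).

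\textbf{Upper bound.} Take $v\in\mathcal W_1$ normalized by $\iS{v^2}=1$ and such that $\nrmS{v'}p^2=\lambda_1^\star$. Expanding $\nrmS{u_\epsilon}r^r=1+\epsilon^2\,r(r-1)/2+O(\epsilon^3)$ for $u_\epsilon:=1+\epsilon\,v$ (the linear term vanishes because $\iS v=0$) gives $\nrmS{u_\epsilon}r^2=1+\epsilon^2\,(r-1)+O(\epsilon^3)$, hence
\[
\frac{\nrmS{u_\epsilon}p^2-\nrmS{u_\epsilon}q^2}{p-q}=\epsilon^2+O(\epsilon^3)\,,\qquad\nrmS{u_\epsilon'}p^2=\epsilon^2\,\lambda_1^\star.
\]
Dividing~\eqref{Interp} applied to $u_\epsilon$ by $\epsilon^2$ and sending $\epsilon\to0$ yields $\lambda_1^\star\ge\Lambda_{p,q}$, equally valid when $q>p$ and when $p-1<q<p$.

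\textbf{Lower bound via carré du champ.} Set $\mathcal E_q[u]:=(p-q)^{-1}(\nrmS up^2-\nrmS uq^2)$ and $\mathcal I_p[u]:=\nrmS{u'}p^2$, and introduce a nonlinear flow of the form
\[
u_t=u^{1-\beta}\,\Lp(u^\beta)+\mu(t)\,u\,,
\]
where the exponent $\beta=\beta(p,q)$ is tuned so that $\tfrac{d}{dt}\mathcal E_q[u(t)]=-c\,\mathcal I_p[u(t)]$ for some $c=c(p,q)>0$, and $\mu(t)$ is the nonlocal Lagrange multiplier enforcing conservation of the relevant $L^r$ norm. Compute $\tfrac{d}{dt}\mathcal I_p[u(t)]$, perform a single integration by parts on $\S^1$ (no boundary term), and regroup the resulting expression so as to recognize a pointwise non-negative remainder — a nonlinear Bochner-type identity. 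This should deliver
\[
\frac{d}{dt}\bigl(\mathcal I_p-\Lambda\,\mathcal E_q\bigr)[u(t)]\le-2\,\Lambda\,c\,\bigl(\mathcal I_p-\Lambda\,\mathcal E_q\bigr)[u(t)]\quad\hbox{for every}\ \Lambda\le\lambda_1\,.
\]
Since $u(t)$ converges to a constant as $t\to+\infty$, both $\mathcal E_q[u(t)]$ and $\mathcal I_p[u(t)]$ vanish in the limit, and the monotonicity of $\mathcal I_p-\Lambda\,\mathcal E_q$ along the flow forces this functional to be non-negative at $t=0$, which is precisely~\eqref{Interp} with constant $\Lambda$; letting $\Lambda\nearrow\lambda_1$ gives $\Lambda_{p,q}\ge\lambda_1$.

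\textbf{Main obstacle.} The crux of the proof is the nonlinear carré du champ step. In the linear case $p=2$, Bochner's identity closes the computation after one integration by parts. For $p\neq2$, differentiation of $\mathcal I_p$ produces integrands weighted by $|u'|^{p-2}$ together with mixed second-derivative terms that do not visibly regroup into a square. The correct choice of the exponent $\beta$ in the substitution $u=v^\beta$ and of the multiplier $\mu(t)$ is exactly what makes the residual integrand a sum of manifestly non-negative terms when $\Lambda\le\lambda_1$; the underlying pointwise algebraic inequality, and the identification of $\lambda_1$ as the threshold beyond which positivity fails, are the heart of the argument. A secondary but real difficulty, in the degenerate parabolic setting, is the rigorous justification of smoothness, positivity, and long-time convergence of solutions of the nonlocal $p$-Laplacian flow driving the computation.
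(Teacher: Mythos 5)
Your upper bound $\Lambda_{p,q}\le\lambda_1^\star$ by expanding at $u_\epsilon=1+\epsilon\,v$ is correct and coincides with the paper's argument (Proposition~\ref{Prop:S1}), and passing to the limit $q\to p$ to obtain~\eqref{LogSob} is also how the paper proceeds. The lower bound, however, is not proved: you describe a strategy and explicitly defer its crux. The paper does run a \emph{carr\'e du champ} argument along the flow $\partial_t u=\tfrac{\nrmS{u'}p^{2-p}}{\nrmS up^{2-p}}\,u^{2-p}\big(\Lp u+(1+q-p)\,\tfrac{|u'|^p}{u}\big)$ (note that the nonlocal correction forced by the $2$-homogeneity of~\eqref{Interp} is a multiplicative ratio of norms, not an additive multiplier $\mu(t)\,u$), but the step you call ``regroup into a pointwise non-negative remainder'' is exactly where the content lies, and it is \emph{not} a pointwise Bochner-type identity. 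Two concrete ingredients are missing. First, the integration by parts $\iS{(\Lp u)\,u^{1-p}\,|u'|^p}=-\,\tfrac{(p-1)^2}{2p-1}\iS{\tfrac{|u'|^{2p}}{u^p}}$, which shows that the mixed term produced by differentiating the Fisher information comes with the sign of $-(1+q-p)$: this is the only place where the hypothesis $q>p-1$, which your argument never invokes, enters. Second, and decisively, the weighted Poincar\'e estimate of Lemma~\ref{Lem:CS}, $\iS{u^{2-p}\,(\Lp u)^2}\ge\lambda_1\,\nrmS{u'}p^{2(p-1)}\,\nrmS up^{2-p}$, which controls the remaining square term and identifies $\lambda_1$ as the threshold; it is obtained not pointwise but by expanding $\iS{u^{2-p}\,|\Lp u+C\,|u|^{p-2}(u-\bar u)|^2}\ge0$ with an optimized constant $C$, then applying H\"older with exponents $p/(p-2)$ and $p/2$ together with the Poincar\'e inequality defining $\lambda_1$. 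Without these two steps the differential inequality $\mathsf i'\le-2\,\lambda_1\,\mathsf i$ is unproven, so the lower bound $\Lambda_{p,q}\ge\lambda_1$ remains open in your write-up.

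Two further points. The paper's \emph{primary} proof is elliptic, not parabolic: one shows that the Euler--Lagrange equation~\eqref{ELS0} admits only constant positive solutions for $\lambda\le\lambda_1/(q-p)$, by multiplying it by $-\,u^{2-p}\Lp u$ and by $(1+q-p)\,u^{1-p}|u'|^p$, subtracting, and applying the same two identities as above; this route entirely avoids the well-posedness and regularity issues of the degenerate nonlocal flow that you correctly flag as a ``secondary but real difficulty'' (and which the paper itself leaves formal in its parabolic section). Finally, your claimed exponential decay of $\mathcal I_p-\Lambda\,\mathcal E_q$ is stronger than needed and does not follow from the stated estimates; monotonicity $\tfrac{d}{dt}(\mathsf i-\lambda_1\,\mathsf e)\le0$ together with the (to-be-justified) vanishing of both quantities at $t=+\infty$ suffices.
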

Inequality~\eqref{LogSob} is an $\mathrm L^p$ logarithmic Sobolev inequality which is reminiscent of, for instance, \cite{MR1957678}. A Taylor expansion that will be detailed in the proof of Proposition~\ref{Prop:S1} (also see Proposition~\ref{Prop:S1b} and Section~\ref{Sec:InterpIneq}) shows that~\eqref{Interp} and~\eqref{LogSob} tested with $u=1+\varepsilon\,v$ and $v\in\mathcal W_1$ are equivalent at order $\varepsilon^2$ to $\nrmS{v'}p^2\ge\lambda_1^\star\,\nrmS v2^2$, which would not be true if, for instance, we were considering $\nrmS{u'}p^\alpha$ with $\alpha\neq2$. This explains why we have to consider the square of the norms in the inequalities and not other powers, for instance $\alpha=1$ or $\alpha=p$. This is also the reason why $\lambda_1$ and $\lambda_1^\star$ are not defined as the standard first positive eigenvalue of the $p$-Laplacian operator.

In 1992, L.~V\'eron considered in~\cite{MR1151712} the equation
\[
-\,\Lp u+|u|^{q-2}\,u=\lambda\,|u|^{p-2}\,u
\]
not only on $\S^1$ but also on general manifolds and proved that it has no solution in $\mathrm W^{1,p}(\S^1)$ except the constant functions if $\lambda\le\lambda_1^{p/2}$ and $1<p<q$. Let us point out that, up to constants that come from the various norms involved in~\eqref{Interp}, the corresponding Euler-Lagrange equation is the same equation when $q<p$, while in the case $q>p$, the Euler-Lagrange equation is, again up to constants that involve the norms, of the form
\[
-\,\Lp u+\lambda\,|u|^{p-2}\,u=|u|^{q-2}\,u\,.
\]

This paper is organized as follows. In Section~\ref{Sec:proof}, we start by proving Theorem~\ref{Thm:Main1} in the case $2<p<q$. The key estimate is the Poincar\'e type estimate of Lemma~\ref{Lem:CS}, which is used in Section~\ref{Sec:rigidity} to prove Proposition~\ref{Prop:M1}. The adaptations needed to deal with the case $q<p$ are listed in Section~\ref{Sec:Extension}.

Section~\ref{Sec:Further} is devoted to further results and consequences. In Section~\ref{Sec:Parabolic}, we give an alternative proof of Theorem~\ref{Thm:Main1} based on a parabolic method. This is the link to the  \emph{carr\'e du champ} methods. The parabolic setting provides a framework in which the computations of Section~\ref{Sec:proof} can be better interpreted. Another consequence of the parabolic approach is that a refined estimate is established by taking into account terms that are simply dropped in the elliptic estimates of Section~\ref{Sec:proof}: see Section~\ref{Sec:improvement}. A last result deals with ground state energy estimates for nonlinear Schr\"odinger type operators, which  generalize to the case of the $p$-Laplacian the Keller-Lieb-Thirring estimates known when $p=2$: see Section~\ref{Sec:Keller-Lieb-Thirring}. Notice that such estimates are completely equivalent to the interpolation inequalities of  Theorem~\ref{Thm:Main1}, including for optimality results.

Numerical results which illustrate our main theoretical results have been collected in Section~\ref{Sec:numerics}. The computations are relatively straightforward because, after a rescaling, the bifurcation problem (described below in Sections~\ref{Sec:Var} and \ref{Sec:Extension}) can be rephrased as a dynamical system such that all quantities associated with critical points can be computed in terms of explicit integrals. Various technical results are collected in the Appendix.

\section{Proof of the main result}\label{Sec:proof}

The goal of this section is to prove Theorem~\ref{Thm:Main1} and some additional results. The emphasis is put on the case $2<p<q$, while the other cases are only sketched.

\subsection{A variational problem}\label{Sec:Var}

On $\S^1$, let us assume that $p<q$ and define
\[
\mathcal Q_\lambda[u]:=\frac{\nrmS{u'}p^2+\lambda\,\nrmS up^2}{\nrmS uq^2}
\]
for any $\lambda>0$. Let
\[
\mu(\lambda):=\inf_{u\in\mathrm W^{1,p}(\S^1)\setminus\{0\}}\mathcal Q_\lambda[u]\,.
\]
In the range $p<q$, inequality~\eqref{Interp} can be embedded in the larger family of inequalities
\be{Interp-1}
\nrmS{u'}p^2+\lambda\,\nrmS up^2\ge\mu(\lambda)\,\nrmS uq^2\quad\forall\,u\in\mathrm W^{1,p}(\S^1)
\ee
so that the optimal constant $\Lambda_{p,q}$ of Theorem~\ref{Thm:Main1} can be characterized as
\[
\Lambda_{p,q}=(q-p)\inf\big\{\lambda>0\,:\,\mu(\lambda)<\lambda\big\}\,.
\]
\begin{proposition}\label{Prop:S1} Assume that $1<p<q$. On $(0,+\infty)$, the function $\lambda\mapsto\mu(\lambda)$ is concave, strictly increasing, such that $\mu(\lambda)<\lambda$ if $\lambda>\lambda_1^\star/(q-p)$.\end{proposition}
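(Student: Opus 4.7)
The plan is to treat the three assertions separately, taking advantage of the fact that $\lambda\mapsto\mathcal{Q}_\lambda[u]$ is affine in $\lambda$ for each fixed $u$.

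\emph{Concavity.} For each fixed $u\in\mathrm{W}^{1,p}(\mathbb{S}^1)\setminus\{0\}$, the map $\lambda\mapsto\mathcal{Q}_\lambda[u]$ is affine with slope $\nrmS{u}{p}^2/\nrmS{u}{q}^2>0$. A pointwise infimum of affine functions is concave, so $\mu$ is concave on $(0,+\infty)$.

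\emph{Strict monotonicity.} Here I would first produce a minimizer of $\mathcal{Q}_\lambda$ for every $\lambda>0$. Taking a minimizing sequence $(u_n)$ normalized by $\nrmS{u_n}{q}=1$, the bound $\mu(\lambda)\le\mathcal{Q}_\lambda[1]=\lambda<\infty$ keeps $\nrmS{u_n'}{p}$ bounded, while H\"older on the probability space gives $\nrmS{u_n}{p}\le\nrmS{u_n}{q}=1$, so $(u_n)$ is bounded in $\mathrm{W}^{1,p}(\mathbb{S}^1)$. The one-dimensional compact embedding $\mathrm{W}^{1,p}(\mathbb{S}^1)\hookrightarrow\mathrm{L}^q(\mathbb{S}^1)$ (available for every finite $q$) yields a subsequence converging strongly in $\mathrm{L}^q$ and in $\mathrm{L}^p$, weakly in $\mathrm{W}^{1,p}$, to some $u_\lambda$ with $\nrmS{u_\lambda}{q}=1$; weak lower semicontinuity of $v\mapsto\nrmS{v'}{p}^2+\lambda\,\nrmS{v}{p}^2$ makes $u_\lambda$ a minimizer. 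For $0<\lambda_1<\lambda_2$ and the associated minimizer $u_{\lambda_2}$ normalized so $\nrmS{u_{\lambda_2}}{q}=1$, one then writes
\[
\mu(\lambda_2)=\mathcal{Q}_{\lambda_1}[u_{\lambda_2}]+(\lambda_2-\lambda_1)\,\nrmS{u_{\lambda_2}}{p}^2\ge\mu(\lambda_1)+(\lambda_2-\lambda_1)\,\nrmS{u_{\lambda_2}}{p}^2>\mu(\lambda_1),
\]
since $u_{\lambda_2}\not\equiv 0$ forces $\nrmS{u_{\lambda_2}}{p}>0$.

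\emph{The strict inequality $\mu(\lambda)<\lambda$.} For $\lambda>\lambda_1^\star/(q-p)$, I would test $\mathcal{Q}_\lambda$ against $u=1+\varepsilon v$ with $v\in\mathcal W_1$, so $\iS{v}=0$; since $v$ is continuous hence bounded, $u>0$ for small $\varepsilon$. Expanding $(1+\varepsilon v)^r=1+r\,\varepsilon v+\frac{r(r-1)}{2}\,\varepsilon^2 v^2+O(\varepsilon^3)$, integrating, and using $\iS{v}=0$ gives
\[
\nrmS{u}{r}^2=1+(r-1)\,\varepsilon^2\,\nrmS{v}{2}^2+O(\varepsilon^3),\qquad \nrmS{u'}{p}^2=\varepsilon^2\,\nrmS{v'}{p}^2,
\]
so expanding the quotient yields
\[
\mathcal{Q}_\lambda[u]=\lambda+\varepsilon^2\left(\nrmS{v'}{p}^2-\lambda(q-p)\,\nrmS{v}{2}^2\right)+O(\varepsilon^3).
\]
The hypothesis $\lambda(q-p)>\lambda_1^\star$ and the definition of $\lambda_1^\star$ let me pick $v\in\mathcal W_1$ with $\nrmS{v'}{p}^2<\lambda(q-p)\,\nrmS{v}{2}^2$; the quadratic correction is then strictly negative, so $\mathcal{Q}_\lambda[u]<\lambda$ for $\varepsilon$ small enough, giving $\mu(\lambda)<\lambda$.

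The most delicate step is the Taylor expansion of $\mathcal{Q}_\lambda[1+\varepsilon v]$: the order-$\varepsilon$ terms must cancel simultaneously in the $\mathrm{L}^p$ and $\mathrm{L}^q$ norms thanks to $\iS{v}=0$, so that the effective order-$\varepsilon^2$ coefficient is exactly $\nrmS{v'}{p}^2-\lambda(q-p)\,\nrmS{v}{2}^2$, which is what makes the threshold $\lambda_1^\star/(q-p)$ sharp.
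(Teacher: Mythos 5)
Your proof is correct and follows the same route as the paper: concavity as an infimum of affine functions of $\lambda$, and the test function $u=1+\varepsilon\,v$ with $\iS v=0$ together with the expansion $\mathcal Q_\lambda[1+\varepsilon\,v]=\lambda+\varepsilon^2\big(\nrmS{v'}p^2-\lambda\,(q-p)\,\nrmS v2^2\big)+O(\varepsilon^3)$ to conclude $\mu(\lambda)<\lambda$ beyond the threshold. The only genuine additions are your direct-method construction of a minimizer $u_{\lambda_2}$ with $\nrmS{u_{\lambda_2}}p>0$ to justify strict monotonicity — a point the paper's proof leaves implicit, invoking existence of minimizers only after the proposition — and your use of $\lambda_1^\star$ as an infimum rather than assuming it is attained, both of which are welcome extra care rather than a change of method.
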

\begin{proof} The concavity is a consequence of the definition of $\mu(\lambda)$ as an infimum of affine functions of $\lambda$. If $\mu(\lambda)=\lambda$, then the equality is achieved by constant functions. If we take
\[
u=1+\varepsilon\,v\quad\mbox{with}\quad\iS v=0
\]
as a test function for $\mathcal Q_\lambda$ and let $\varepsilon\to0$, then we get
\[
\mathcal Q_\lambda[1+\varepsilon\,v]-\lambda\sim\varepsilon^2\(\nrmS{v'}p^2-\lambda\,(q-p)\,\nrmS v2^2\)\,.
\]
Let us take an optimal $v$ for the minimization problem corresponding to $\lambda_1^\star$, so that the r.h.s.~becomes proportional to $\lambda_1^\star-\lambda\,(q-p)$. As a consequence, we know that $\mu(\lambda)<\lambda$ if $\lambda>\lambda_1^\star/(q-p)$.
\end{proof}

By standard methods of the calculus of variations, we know that the infimum $\mu(\lambda)$ is achieved for any $\lambda>0$ by some a.e.~positive function in $W^{1,p}(\S^1)$. As a consequence, we know that there are multiple solutions to the Euler-Lagrange equation if $\lambda>\lambda_1^\star/(q-p)$. This equation can be written as
\be{ELS0}
-\,\nrmS{u'}p^{2-p}\,\Lp u+\lambda\,\nrmS up^{2-p}\,u^{p-1}=\mu(\lambda)\,\nrmS uq^{2-q}\,u^{q-1}\,.
\ee
What we want to prove is that~\eqref{ELS0} has no non-constant solution for $\lambda>0$, small enough.
\begin{proposition}\label{Prop:M1} Assume that $2<p<q$ and $\lambda>0$. Equation~\eqref{ELS0} has a unique positive solution, up to a multiplication by a constant, if $\lambda\le\lambda_1/(q-p)$. This means that only constants are solutions.\end{proposition}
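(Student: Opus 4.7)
The plan is to prove rigidity via a carr\'e du champ type identity, combined with the Poincar\'e-type estimate of Lemma~\ref{Lem:CS}. Assume $u$ is a smooth, a.e.~positive, non-constant solution of \eqref{ELS0}. By the homogeneity and scaling invariance $u\mapsto c\,u$ of \eqref{ELS0}, I may normalize so that $\nrmS up=1$. Testing \eqref{ELS0} against $u$ and integrating by parts yields the saturation identity
\[
\nrmS{u'}p^2+\lambda\,\nrmS up^2=\mu(\lambda)\,\nrmS uq^2,
\]
i.e.\ $u$ achieves the infimum $\mu(\lambda)$ of $\mathcal Q_\lambda$.

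Next, I would produce a second integral relation by testing \eqref{ELS0} against a nonlinear combination of $u$ and $u'$ — the natural candidate, coming from the \emph{carr\'e du champ} picture, is to multiply by $\Lp u$ itself (or equivalently by $(u^\alpha)''$ for a suitable exponent $\alpha$ tied to $p$ and $q$). After integrating by parts, one obtains a relation in which a sum of manifestly non-negative quadratic-type terms in $u'$ and $u''$ — the dissipation from the carr\'e du champ — is balanced against the nonlinear force terms, which via \eqref{ELS0} can be rewritten purely in terms of $u$-moments already controlled by the saturation identity.

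The final step is to invoke Lemma~\ref{Lem:CS}, whose role is to estimate the \emph{curvature-like} remainder by $\lambda_1$ times the gradient term that appears as the dissipation. Applying the lemma to an appropriate power $v=u^\beta$ (with zero mean after a suitable shift) and feeding the result into the second identity, one arrives, after using the saturation identity to eliminate $\mu(\lambda)\,\nrmS uq^2$, at an inequality of the schematic form
\[
\bigl(\lambda_1-(q-p)\,\lambda\bigr)\cdot\mathcal D(u)\le 0,
\]
with $\mathcal D(u)\ge0$ and $\mathcal D(u)=0$ only when $u'\equiv0$. Hence $\lambda\le\lambda_1/(q-p)$ forces $u$ to be constant, proving the claim.

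The main obstacle is algebraic: unlike the case $p=2$ treated in~\cite{MR1134481}, the test-function manipulation for the $p$-Laplacian produces several mixed-power terms (typically of the form $|u'|^{2p-4}\,|u''|^2$, $|u'|^{2p-2}\,u^{-\gamma}$ and $u^{q-p-2}|u'|^2$) which do not combine a priori into a single signed quantity. Choosing the test function and the exponent $\beta$ so that the surplus terms reassemble into the precise gradient expression appearing on the left-hand side of Lemma~\ref{Lem:CS}, with no indefinite cross term surviving, is the delicate point and is the reason the condition $q>p-1$ together with $p>2$ enters the computation.
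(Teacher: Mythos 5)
Your overall strategy is the right one and coincides with the paper's: test \eqref{ELS0} against a nonlinear expression in $u$ and $u'$, integrate by parts, and apply Lemma~\ref{Lem:CS} to conclude rigidity. But the proposal stops exactly where the proof begins. The entire content of Proposition~\ref{Prop:M1} is the algebraic step you defer as ``the main obstacle'': the specific choice of multipliers. The paper uses \emph{two} of them, $-\,u^{2-p}\,\Lp u$ and $(1+q-p)\,u^{1-p}\,|u'|^p$, chosen so that both resulting identities carry the \emph{same} right-hand side $(1+q-p)\,\mu(\lambda)\,\nrmS uq^{2-q}\iS{u^{q-p}\,|u'|^p}$; subtracting one identity from the other cancels this term exactly. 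This cancellation is essential and cannot be replaced by the route you sketch: the term $\iS{u^{q-p}\,|u'|^p}$ is \emph{not} a $u$-moment controlled by your saturation identity --- it is a mixed gradient term with no usable sign or size relation to $\nrmS uq^2$, so ``rewriting the force terms via \eqref{ELS0}'' does not dispose of it.

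Once the $\mu(\lambda)$ terms are cancelled, a single integration by parts (using $\iS{u^{1-p}\,\Lp u\,|u'|^p}=\tfrac{(p-1)^2}{2p-1}\iS{u^{-p}\,|u'|^{2p}}$) leaves
\[
\iS{u^{2-p}\,(\Lp u)^2}+\frac{(1+q-p)\,(p-1)^2}{2\,p-1}\iS{\frac{|u'|^{2p}}{u^p}}-\lambda\,(q-p)\,\frac{\nrmS{u'}p^{p-2}}{\nrmS up^{p-2}}\iS{|u'|^p}=0\,,
\]
and Lemma~\ref{Lem:CS} is then applied to $u$ itself --- no auxiliary power $v=u^\beta$ and no zero-mean shift are needed, since the weight $u^{2-p}$ in the first integral is precisely the one appearing in \eqref{Ineq:2.3}. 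The hypothesis $q>p-1$ enters only to make the coefficient $1+q-p$ of the manifestly nonnegative second integral positive, so that dropping it preserves the inequality. In short: correct skeleton, but the decisive computation --- the pair of multipliers and the exact cancellation of the nonlinear right-hand side --- is missing, and the substitute you propose for it would not close the argument.
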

The proof of this result is given in Section~\ref{Sec:rigidity}. As a preliminary step, we establish a Poincar\'e estimate.

\subsection{A Poincar\'e estimate}\label{Sec:Poincare}

Let us consider the Poincar\'e inequality
\[
\nrmS{v'}p^2-\lambda_1\,\nrmS vp^2\ge0\quad\forall\,v\in\mathcal W_1\,,
\]
which is a consequence of the definition of $\lambda_1$.
\begin{lemma}\label{Lem:CS} Assume that $p>2$. Then for any $u\in\mathrm W^{2,p}(\S^1)\setminus\{0\}$, we have
\be{Ineq:2.3}
\iS{u^{2-p}\,(\Lp u)^2}\ge\lambda_1\,\frac{\nrmS{u'}p^{2(p-1)}}{\nrmS up^{p-2}}\,.
\ee
Moreover $\lambda_1$ is the sharp constant.
\end{lemma}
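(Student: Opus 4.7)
My plan is to establish~\eqref{Ineq:2.3} through a short chain of three elementary inequalities---Cauchy--Schwarz, H\"older, and the Poincar\'e estimate defining $\lambda_1$---glued together by an integration-by-parts identity that makes the weight $u^{2-p}$ emerge naturally.

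The starting point is the computation
\[
\nrmS{u'}p^p=-\iSd{u\,\Lp u}=-\iSd{(u-\bar u)\,\Lp u},
\]
where $\bar u:=\iSd{u}$. The first equality is integration by parts on $\S^1$ using periodicity; the second uses the further identity $\iSd{\Lp u}=0$, again by periodicity. Assuming $u>0$ (the case of interest for Proposition~\ref{Prop:M1}), I split the integrand as
\[
-(u-\bar u)\,\Lp u=\bigl[(u-\bar u)\,u^{(p-2)/2}\bigr]\cdot\bigl[-u^{(2-p)/2}\,\Lp u\bigr]
\]
and apply Cauchy--Schwarz to obtain
\[
\nrmS{u'}p^{2p}\le\iSd{u^{p-2}(u-\bar u)^2}\cdot\iSd{u^{2-p}(\Lp u)^2}.
\]

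The first factor on the right is then handled by H\"older with the conjugate exponents $p/(p-2)$ and $p/2$, giving
\[
\iSd{u^{p-2}(u-\bar u)^2}\le\nrmS{u}p^{p-2}\,\nrmS{u-\bar u}p^2.
\]
Since $u-\bar u\in\mathcal W_1$, the definition of $\lambda_1$ yields $\nrmS{u-\bar u}p^2\le\lambda_1^{-1}\,\nrmS{u'}p^2$. Chaining these three estimates and dividing through produces~\eqref{Ineq:2.3}. Sharpness of $\lambda_1$ enters only through the Poincar\'e step, which is itself sharp: I would confirm it by testing with a family of positive functions of the form $u_c=c+\varphi$, where $\varphi$ is a first eigenfunction of $\Lp$ and $c$ is chosen (in a suitable limiting regime) so that the ratio of the two sides of~\eqref{Ineq:2.3} approaches $\lambda_1$.

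The step I expect to require the most thought is the Cauchy--Schwarz splitting. A naive pairing of $u$ with $\Lp u$ (without first centering by $\bar u$) produces only the ratio $\nrmS{u'}p^2/\nrmS{u}p^2$, which is \emph{not} bounded below by $\lambda_1$ for positive $u$. The key observation is that $\iSd{\Lp u}=0$ lets us replace $u$ by $u-\bar u\in\mathcal W_1$, at which point Poincar\'e supplies the correct constant; meanwhile the factor $u^{(p-2)/2}$ in the splitting is precisely what is needed to produce the weight $u^{2-p}$ in the final bound.
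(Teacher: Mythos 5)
Your derivation of the inequality itself is correct and is in substance identical to the paper's: the paper expands the square $\iS{u^{2-p}\,|\Lp u+C\,|u|^{p-2}(u-\bar u)|^2}\ge0$ with the optimal choice $C=\lambda_1\,\nrmS{u'}p^{p-2}/\nrmS up^{p-2}$, which is exactly your weighted Cauchy--Schwarz pairing of $u^{(2-p)/2}\Lp u$ against $u^{(p-2)/2}(u-\bar u)$; the subsequent H\"older step with exponents $p/(p-2)$, $p/2$ and the Poincar\'e step defining $\lambda_1$ are the same in both arguments. (Two trivial points to record: the case $u'\equiv0$ must be set aside before dividing by $\nrmS{u'}p^2$, and the restriction to $u>0$ should be stated as part of the interpretation of the weight $u^{2-p}$.)

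The gap is in your verification that $\lambda_1$ is sharp. Equality in your chain forces equality simultaneously in Cauchy--Schwarz, in H\"older, and in the Poincar\'e inequality; the last two together force $|u|$ to be proportional to $|u-\bar u|$ with $u-\bar u$ an optimizer for $\lambda_1$, hence $\bar u=0$ and $u$ a (sign-changing) translate of the eigenfunction $f_p$ of $\Lp$. No \emph{positive} function of the form $c+\varphi$ can therefore achieve equality, and the natural limiting regime $c\to+\infty$ (equivalently $u=1+\varepsilon\,\varphi$, $\varepsilon\to0$) provably fails: as observed in Section~\ref{Sec:MoreOpen}, in that limit~\eqref{Ineq:2.3} degenerates into the Poincar\'e--Wirtinger inequality $\iS{(\Lp v)^2}\ge\lambda_1\,\nrmS{v'}p^{2(p-1)}$, in which $\lambda_1$ is \emph{not} the optimal constant, so the ratio of the two sides stays bounded away from $\lambda_1$ along your family. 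The correct way to get sharpness, as done in Appendix~\ref{Sec:Appendix:Inequalities}, is to take $u=f_p$ itself (reading the weight as $|u|^{2-p}$, which is harmless since $\Lp f_p=-\Lambda_1\,|f_p|^{p-2}f_p$ vanishes at the zeros of $f_p$): one checks that every inequality in the chain becomes an equality for this $u$. Replace your test family by this exact optimizer, or justify an approximation of $f_p$ by admissible functions, and the proof is complete.
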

\begin{proof} By expanding the square, we know that
\begin{multline*}
0\le\iS{u^{2-p}\,\left|\Lp u+C\,|u|^{p-2}\,(u-\bar u)\right|^2}=\iS{u^{2-p}\,(\Lp u)^2}-\,C\,\nrmS{u'}p^p\\
-\,C\(\nrmS{u'}p^p-\,C\iS{|u|^{p-2}\,(u-\bar u)^2}\)\,.
\end{multline*}
With
\[
C=\lambda_1\,\frac{\nrmS{u'}p^{p-2}}{\nrmS up^{p-2}}\,,\quad v=u-\bar u\quad\mbox{and}\quad\bar u=\iS u\,,
\]
we know that
\begin{multline*}
\iS{\frac{u^{2-p}}{\nrmS up^{2-p}}\,(\Lp u)^2}-\lambda_1\,\nrmS{u'}p^{2(p-1)}\\
\ge\lambda_1\,\nrmS{u'}p^{2(p-2)}\(\nrmS{u'}p^2-\lambda_1\iS{\frac{|u|^{p-2}}{\nrmS up^{p-2}}\,v^2}\)\,.
\end{multline*}
Assuming that $p>2$, H\"older's inequality with exponents $p/(p-2)$ and $p/2$ shows that
\[
\iS{\frac{|u|^{p-2}}{\nrmS up^{p-2}}\,v^2}\le\nrmS up^{2-p}\left[\iS{\(|u|^{p-2}\)^\frac p{p-2}}\right]^\frac{p-2}p\,\nrmS vp^2=\nrmS vp^2\,.
\]
We observe that $v$ is in $\mathcal W_1$ so that $\lambda_1\,\nrmS vp^2\le\nrmS{v'}p^2$ and
\[
\lambda_1\iS{|u|^{p-2}\,v^2}\le\nrmS{u'}p^2\,\nrmS up^{p-2}
\]
(see~\eqref{Ineq4} for further considerations). Hence we conclude that
\begin{multline*}
\iS{\frac{u^{2-p}}{\nrmS up^{2-p}}\,(\Lp u)^2}-\lambda_1\,\nrmS{u'}p^{2(p-1)}\\
\ge\lambda_1\,\nrmS{u'}p^{2(p-2)}\(\nrmS{v'}p^2-\lambda_1\,\nrmS vp^2\)\ge0\,.
\end{multline*}
The fact that $\lambda_1$ is optimal is obtained by considering the equality case in the above inequalities. See details in Appendix~\ref{Sec:Appendix:Inequalities}.\end{proof}

\subsection{A first rigidity result}\label{Sec:rigidity}

We adapt the strategy of~\cite{MR1134481,MR615628} when $p=2$ to the case $p>2$ using the Poincar\'e estimate of Section~\ref{Sec:Poincare}.
\begin{proof}[Proof of Proposition~\ref{Prop:M1}]
Let us consider a positive solution to~\eqref{ELS0}. If we multiply~\eqref{ELS0} by $-\,u^{2-p}\Lp u$ and integrate on~$\S^1$, we obtain the identity
\begin{multline*}
\nrmS{u'}p^{2-p}\iS{u^{2-p}\,(\Lp u)^2}+\lambda\,\nrmS up^{2-p}\,\iS{|u'|^p}\\
=(1+q-p)\,\mu(\lambda)\,\nrmS uq^{2-q}\iS{u^{q-p}\,|u'|^p}\,.
\end{multline*}
If we multiply~\eqref{ELS0} by $(1+q-p)\,u^{1-p}\,|u'|^p$ and integrate on $\S^1$, we obtain the identity
\begin{multline*}
-\,\nrmS{u'}p^{2-p}\,(1+q-p)\iS{u^{1-p}\Lp u\,|u'|^p}+\lambda\,\nrmS up^{2-p}\,(1+q-p)\iS{|u'|^p}\\
=(1+q-p)\,\mu(\lambda)\,\nrmS uq^{2-q}\iS{u^{q-p}\,|u'|^p}\,.
\end{multline*}
By subtracting the second identity from the first one, we obtain that
\begin{multline*}
\nrmS{u'}p^{2-p}\(\iS{u^{2-p}\,(\Lp u)^2}+\,(1+q-p)\iS{u^{1-p}\Lp u\,|u'|^p}\)\\
-\lambda\,(q-p)\,\nrmS up^{2-p}\iS{|u'|^p}=0\,.
\end{multline*}
After an integration by parts, the above identity can be rewritten as
\begin{multline*}\label{Eqn:Identity1}
\iS{u^{2-p}\,(\Lp u)^2}+\,\frac{(1+q-p)\,(p-1)^2}{2\,p-1}\iS{\frac{|u'|^{2p}}{u^p}}\\
-\lambda\,(q-p)\,\frac{\nrmS{u'}p^{p-2}}{\nrmS up^{p-2}}\iS{|u'|^p}=0\,.
\end{multline*}
By Lemma~\ref{Lem:CS}, this proves that
\[
\big(\lambda_1-\lambda\,(q-p)\big)\,\frac{\nrmS{u'}p^{2(p-1)}}{\nrmS up^{p-2}}+\,\frac{(1+q-p)\,(p-1)^2}{2\,p-1}\iS{\frac{|u'|^{2p}}{u^p}}\le0\,.
\]
If $\lambda\le\lambda_1/(q-p)$, this proves that $u$ is a constant. This completes the proof of Proposition~\ref{Prop:M1}.\end{proof}

\subsection{An extension of the range of the parameters}\label{Sec:Extension}

So far we have considered only the case $q>p$. Let us consider the case $1<q<p$ and define, in that case, $\mathcal Q^\mu$ by
\[
\mathcal Q^\mu[u]:=\frac{\nrmS{u'}p^2+\mu\,\nrmS uq^2}{\nrmS up^2}
\]
for any $\mu>0$. Let
\[
\lambda(\mu):=\inf_{u\in\mathrm W^{1,p}(\S^1)\setminus\{0\}}\mathcal Q^\mu[u]\,.
\]
If $\mu(\lambda)=\lambda$, then the equality is achieved by constant functions. In the range $p>q$, inequality~\eqref{Interp} can be embedded in the larger family of inequalities
\be{Interp-2}
\nrmS{u'}p^2+\mu\,\nrmS uq^2\ge\lambda(\mu)\,\nrmS up^2\quad\forall\,u\in\mathrm W^{1,p}(\S^1)
\ee
so that the optimal constant $\Lambda_{p,q}$ of Theorem~\ref{Thm:Main1} can be characterized as
\[
\Lambda_{p,q}=(p-q)\,\inf\big\{\mu>0\,:\,\lambda(\mu)<\mu\big\}\,.
\]
As in Section~\ref{Sec:Var}, a Taylor expansion allows us to prove the following result.
\begin{proposition}\label{Prop:S1b} Assume that $1<q<p$. The function $\mu\mapsto\lambda(\mu)$ is concave, strictly increasing, such that $\lambda(\mu)<\mu$ if $\mu>\lambda_1^\star/(p-q)$.\end{proposition}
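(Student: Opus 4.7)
My plan is to replay the proof of Proposition~\ref{Prop:S1} with the roles of $\lambda$ and $\mu$ interchanged. The three assertions decouple cleanly: concavity is immediate from the variational definition of $\lambda(\mu)$, strict monotonicity follows from a direct comparison using an attained minimizer, and the strict inequality $\lambda(\mu)<\mu$ comes from a perturbation of constants by an optimizer of $\lambda_1^\star$.

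First, for each fixed $u\in\mathrm W^{1,p}(\S^1)\setminus\{0\}$, the map
\[
\mu\longmapsto\mathcal Q^\mu[u]=\frac{\nrmS{u'}p^2}{\nrmS up^2}+\mu\,\frac{\nrmS uq^2}{\nrmS up^2}
\]
is affine, so $\mu\mapsto\lambda(\mu)$ is a concave function on $(0,+\infty)$ as an infimum of a family of affine functions. For strict monotonicity, standard arguments in the calculus of variations (based on the compactness of the embedding $\mathrm W^{1,p}(\S^1)\hookrightarrow\mathrm L^q(\S^1)$) ensure that the infimum $\lambda(\mu_2)$ is attained for every $\mu_2>0$ by some $u_\star\not\equiv0$. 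Then for $0<\mu_1<\mu_2$,
\[
\lambda(\mu_1)\le\mathcal Q^{\mu_1}[u_\star]=\lambda(\mu_2)-(\mu_2-\mu_1)\,\frac{\nrmS{u_\star}q^2}{\nrmS{u_\star}p^2}<\lambda(\mu_2),
\]
since $\nrmS{u_\star}q>0$.

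For the strict upper bound, I would test $\mathcal Q^\mu$ against $u=1+\varepsilon\,v$ with $v\in\mathcal W_1$ an optimizer for $\lambda_1^\star$, so that $\iS v=0$ and $\nrmS{v'}p^2=\lambda_1^\star\,\nrmS v2^2$. Expanding at $\varepsilon=0$ and using $\iS v=0$ one finds
\[
\nrmS ur^2=1+(r-1)\,\varepsilon^2\,\nrmS v2^2+O(\varepsilon^3)\quad\mbox{for }r\in\{p,q\},
\]
together with the exact identity $\nrmS{u'}p^2=\varepsilon^2\,\nrmS{v'}p^2$. Substituting, the constant terms cancel and the coefficient of $\varepsilon^2$ becomes $\nrmS{v'}p^2+\mu\,(q-1)\,\nrmS v2^2-\mu\,(p-1)\,\nrmS v2^2$, so
\[
\mathcal Q^\mu[1+\varepsilon\,v]-\mu=\varepsilon^2\,\nrmS v2^2\,\bigl[\lambda_1^\star-\mu\,(p-q)\bigr]+O(\varepsilon^3).
\]
If $\mu>\lambda_1^\star/(p-q)$, the bracket is strictly negative and $\mathcal Q^\mu[1+\varepsilon\,v]<\mu$ for small $\varepsilon>0$, which yields $\lambda(\mu)<\mu$.

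I do not anticipate any substantive obstacle; the only genuine piece of computation is the Taylor expansion in the last step, which is precisely the one already used for Proposition~\ref{Prop:S1} (and announced for further detail in Section~\ref{Sec:InterpIneq}), up to the harmless replacement of $(q-p)$ by $(p-q)$ in the coefficient of the quadratic term.
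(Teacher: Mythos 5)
Your proposal is correct and follows essentially the same route as the paper, which proves Proposition~\ref{Prop:S1b} simply by invoking the Taylor-expansion argument of Proposition~\ref{Prop:S1} with the roles of $\lambda$ and $\mu$ (and of the $\mathrm L^p$ and $\mathrm L^q$ norms) interchanged; your expansion $\mathcal Q^\mu[1+\varepsilon\,v]-\mu=\varepsilon^2\,\nrmS v2^2\,[\lambda_1^\star-\mu\,(p-q)]+O(\varepsilon^3)$ is exactly the intended computation. You merely add detail the paper leaves implicit, namely the attainment-based argument for strict monotonicity.
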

As a consequence of Proposition~\ref{Prop:S1b}, there are non-constant solutions to the Euler-Lagrange equation if $\mu>\lambda_1^\star/(p-q)$ which means multiple solutions. This equation can be written as
\be{ELS0b}
-\,\nrmS{u'}p^{2-p}\,\Lp u+\mu\,\nrmS uq^{2-q}\,u^{q-1}=\lambda(\mu)\,\nrmS up^{2-p}\,u^{p-1}\,.
\ee
There is also a uniqueness range.
\begin{proposition}\label{Prop:M2} Assume that $p>2$ and $p-1<q<p$. Equation~\eqref{ELS0b} has a unique positive solution, up to a multiplication by a constant, if $\mu\le\lambda_1/(p-q)$.\end{proposition}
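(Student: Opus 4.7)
The plan is to mirror the proof of Proposition~\ref{Prop:M1}: equation~\eqref{ELS0b} has the same two-nonlinearity structure as~\eqref{ELS0}, so the same combination of two multiplier identities followed by the Poincar\'e estimate of Lemma~\ref{Lem:CS} should yield the rigidity bound.

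Let $u$ be a positive solution of~\eqref{ELS0b}. First I would multiply~\eqref{ELS0b} by $-\,u^{2-p}\,\Lp u$ and integrate on $\S^1$. Using the integrations by parts $\iS{u\,\Lp u}=-\,\nrmS{u'}p^p$ and $\iS{u^{q-p+1}\,\Lp u}=-(1+q-p)\iS{u^{q-p}\,|u'|^p}$, this yields
\[
\nrmS{u'}p^{2-p}\iS{u^{2-p}\,(\Lp u)^2}+(1+q-p)\,\mu\,\nrmS uq^{2-q}\iS{u^{q-p}\,|u'|^p}=\lambda(\mu)\,\nrmS up^{2-p}\,\nrmS{u'}p^p\,.
\]
Second, multiplying~\eqref{ELS0b} by $(1+q-p)\,u^{1-p}\,|u'|^p$ and integrating yields
\[
-(1+q-p)\,\nrmS{u'}p^{2-p}\iS{u^{1-p}\,\Lp u\,|u'|^p}+(1+q-p)\,\mu\,\nrmS uq^{2-q}\iS{u^{q-p}\,|u'|^p}=(1+q-p)\,\lambda(\mu)\,\nrmS up^{2-p}\,\nrmS{u'}p^p\,.
\]
Subtracting eliminates the mixed term $\iS{u^{q-p}|u'|^p}$. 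Note that, in contrast with Proposition~\ref{Prop:M1}, the cancellation now produces a factor $1-(1+q-p)=p-q>0$ on the right-hand side.

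Invoking the integration-by-parts identity $\iS{u^{1-p}\,\Lp u\,|u'|^p}=\frac{(p-1)^2}{2\,p-1}\iS{|u'|^{2p}/u^p}$ already used in the proof of Proposition~\ref{Prop:M1}, and bounding $\iS{u^{2-p}(\Lp u)^2}$ from below via Lemma~\ref{Lem:CS}, the resulting inequality reorganizes as
\[
\big(\lambda_1-(p-q)\,\lambda(\mu)\big)\,\frac{\nrmS{u'}p^{2(p-1)}}{\nrmS up^{p-2}}+\frac{(1+q-p)\,(p-1)^2}{2\,p-1}\iS{\frac{|u'|^{2p}}{u^p}}\le0\,.
\]
Since constants are admissible in the variational definition of $\lambda(\mu)$, one always has $\lambda(\mu)\le\mathcal Q^\mu[1]=\mu$, so the hypothesis $\mu\le\lambda_1/(p-q)$ yields $\lambda_1-(p-q)\,\lambda(\mu)\ge0$. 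Combined with the positivity of $1+q-p$, both terms on the left-hand side are non-negative and therefore vanish; this forces $|u'|\equiv 0$, so $u$ is constant.

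The delicate step is precisely the sign of the coefficient $1+q-p$ that the elimination procedure leaves in front of the remainder $\iS{|u'|^{2p}/u^p}$: its positivity is what makes this term add constructively to the Poincar\'e contribution rather than compete with it, and it is exactly what the hypothesis $q>p-1$ guarantees. This is the only structural difference with the proof of Proposition~\ref{Prop:M1}, where $q>p$ makes $1+q-p>1$ automatic; beyond this sign check, the argument is a line-for-line mirror of the one given in Section~\ref{Sec:rigidity}.
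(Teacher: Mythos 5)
Your argument is correct and is essentially the paper's own proof, which simply reruns the computation of Proposition~\ref{Prop:M1} with $\lambda$ and $\mu(\lambda)$ replaced by $-\lambda(\mu)$ and $-\mu$; your two multiplier identities, the elimination of the mixed term, the integration-by-parts constant $\tfrac{(p-1)^2}{2p-1}$, and the final appeal to Lemma~\ref{Lem:CS} all match. You even make explicit two points the paper leaves implicit, namely that the hypothesis $q>p-1$ is exactly what keeps the coefficient $1+q-p$ positive, and that the passage from $\mu\le\lambda_1/(p-q)$ to $\lambda_1-(p-q)\,\lambda(\mu)\ge0$ uses $\lambda(\mu)\le\mathcal Q^\mu[1]=\mu$.
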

\begin{proof} The computation is exactly the same as in the proof of Proposition~\ref{Prop:M1}, except that $\lambda$ and $\mu(\lambda)$ have to be replaced by $-\lambda(\mu)$ and $-\mu$ respectively.
\begin{multline*}\label{Eqn:Identity1}
\iS{u^{2-p}\,(\Lp u)^2}+\,\frac{(1+q-p)\,(p-1)^2}{2\,p-1}\iS{\frac{|u'|^{2p}}{u^p}}\\
-\lambda\,(p-q)\,\frac{\nrmS{u'}p^{p-2}}{\nrmS up^{p-2}}\iS{|u'|^p}=0\,.
\end{multline*}
The conclusion then holds by Lemma~\ref{Lem:CS} as in the case $q>p$.\end{proof}

\subsection{The interpolation inequalities}\label{Sec:InterpIneq}

\begin{proof}[Proof of Theorem~\ref{Thm:Main1}] Inequality~\eqref{Interp} follows from Inequalities~\eqref{Interp-1} and~\eqref{Interp-2} with
\begin{align*}
\Lambda_{p,q}:=&(q-p)\,\min\big\{\lambda>0\,:\,\mu(\lambda)<\lambda\big\}\quad\mbox{if}\quad2<p<q\,,\\
\Lambda_{p,q}:=&(p-q)\,\min\big\{\mu>0\,:\,\lambda(\mu)<\mu\big\}\quad\mbox{if}\quad p>2\quad\mbox{and}\quad p-1<q<p\,.
\end{align*}
It remains to consider the limit case as $q\to p$. By passing to the limit in the right hand side, we obtain the $\mathrm L^p$ logarithmic Sobolev inequality~\eqref{LogSob}. The upper bound $\Lambda_{p,p}\le\lambda_1^\star$ is easily checked by computing
\begin{multline*}
\nrmS{u_\varepsilon'}p^2-\frac2p\,\lambda\,\nrmS{u_\varepsilon}p^{2-p}\iS{|u_\varepsilon|^p\,\log\(\frac{|u_\varepsilon|}{\nrmS{u_\varepsilon}p}\)}\\
=\varepsilon^2\,(\lambda_1-\lambda)\nrmS{v}p^{2-p}+o(\varepsilon^2)
\end{multline*}
where $u_\varepsilon=1+\varepsilon\,v$ and $v$ is an optimal function for the minimization problem corresponding to $\lambda_1^\star$.\end{proof}

\section{Further results and consequences}\label{Sec:Further}

In this section we collect a list of results which go beyond the statement of Theorem~\ref{Thm:Main1}. Let us start with an alternative proof of this results which paves the route to an improved interpolation inequality, compared to inequality~\eqref{Interp}.

\subsection{The parabolic point of view}\label{Sec:Parabolic}

As in~\cite{DEKL}, the method of Section~\ref{Sec:rigidity} can be rephrased using an evolution problem in the framework of the \emph{carr\'e du champ} method. Here we shall consider the $1$-homogenous $p$-Laplacian flow
\be{flow}
\frac{\partial u}{\partial t}=\frac{\nrmS{u'}p^{2-p}}{\nrmS up^{2-p}}\,u^{2-p}\(\Lp u+(1+q-p)\,\frac{|u'|^p}u\)\,.
\ee
The main originality compared to previous results based on the \emph{carr\'e du champ} method is that a nonlocal term involving the norms $\nrmS{u'}p$ and $\nrmS up$ has to be introduced in order to obtain a linear estimate of the \emph{entropy}, defined as
\[
\mathsf e(t):=\frac{\nrmS up^2-\nrmS uq^2}{p-q}
\]
if $p\neq q$, in terms of the \emph{Fisher information}
\[
\mathsf i(t):=\nrmS{u'}p^2\,.
\]
If $u$ is a positive solution of~\eqref{flow}, we first observe that
\[
\frac d{dt}\iS{u^q}=q\,\frac{\nrmS{u'}p^{2-p}}{\nrmS up^{2-p}}\iS{u^{1+q-p}\(\Lp u+(1+q-p)\,\frac{|u'|^p}u\)}=0\,.
\]
Hence $\nrmS uq^2$ does not depend on $t$ and we may assume without loss of generality that $\nrmS uq^2=1$. After an integration by parts,
\[
\mathsf e'=\frac{d\,\mathsf e}{dt}=2\,\frac{\nrmS{u'}p^{2-p}}{p-q}\iS{u\(\Lp u+(1+q-p)\,\frac{|u'|^p}u\)}=-\,2\,\mathsf i
\]
if $p\neq q$. A similar computation shows that $\mathsf e'=-\,2\,\mathsf i$ is also true when $p=q$, provided we define the entropy by
\[
\mathsf e(t):=\frac2p\,\nrmS up^{2-p}\iS{|u|^p\,\log\(\frac{|u|}{\nrmS up}\)}
\]
in that case, with $\mathsf i:=\nrmS{u'}p^2$ as before.

One more derivation along the flow shows that
\[
\mathsf i'=\frac{d\,\mathsf i}{dt}=-\,2\,\frac{\nrmS{u'}p^{2(2-p)}}{\nrmS up^{2-p}}\iS{\big(\Lp u\big)\,u^{2-p}\(\Lp u+(1+q-p)\,\frac{|u'|^p}u\)}\,.
\]
Using an integration by parts, we have
\[
\iS{\big(\Lp u\big)\,u^{2-p}\,\frac{|u'|^p}u}=\frac{(p-1)^2}{2\,p-1}\iS{\frac{|u'|^{2p}}{u^p}}\ge0\,.
\]
With the help of Lemma~\ref{Lem:CS}, we conclude that
\be{didt}
\mathsf i'\le-\,2\,\lambda_1\,\mathsf i-\,2\,(1+q-p)\,\frac{(p-1)^2}{2\,p-1}\,\frac{\nrmS{u'}p^{2(2-p)}}{\nrmS up^{2-p}}\iS{\frac{|u'|^{2p}}{u^p}}
\le-\,2\,\lambda_1\,\mathsf i\,.
\ee
Hence a positive solution of~\eqref{flow} is such that $\mathsf i(t)\le\mathsf i(0)\,e^{-\,2\,\lambda_1\,t}$ for any $t\ge0$ and thus $\lim_{t\to+\infty}\mathsf i(t)=0$. We will see next that $\lim_{t\to+\infty}\mathsf e(t)=0$. As $\nrmS uq^2=1$, for each $n\in\mathbb N$, there exists $x_n\in[0,2\pi)$ such that $u(x_n,n)=1$, hence
\[
|u(x,n)-1|=|u(x,n)-u(x_n,n)|\le C\,\nrmS{u'(\cdot,n)}p
\]
implying that $|\nrmS{u(\cdot,n)}p-1|\to 0$ as $n\to\infty$. Since $\lim_{t\to\infty}\mathsf e(t)$ exists, our claim follows. After observing that
\[
\frac d{dt}\(\mathsf i-\,\lambda_1\,\mathsf e\)\le0\quad\mbox{and}\quad\lim_{t\to+\infty}\big(\mathsf i(t)-\,\lambda_1\,\mathsf e(t)\big)=0\,,
\]
we conclude that $\mathsf i-\,\lambda_1\,\mathsf e\ge0$ at any $t\ge0$ and, as a special case, at $t=0$, for an arbitrary initial datum. This is already an alternative proof of Theorem~\ref{Thm:Main1}. The inequality $\mathsf i\ge\lambda_1\,\mathsf e$ is usually referred to as the \emph{entropy -- entropy production inequality} in the literature: see for instance~\cite{MR2065020}.

So far, this proof is formal as we did not establish the existence of the solutions to the parabolic problem nor the regularity which is needed to justify all steps. To make the proof rigorous, here are the main steps that have to be done: \begin{enumerate}
\item Regularize the initial datum to make it as smooth as needed and bound it from below by a positive constant, and from above by another positive constant.
\item Regularize the operator by considering for instance the operator
\[
u\mapsto(p-1)\,\(\varepsilon^2+|u'|^2\)^{\frac p2-1}\,u''
\]
for an arbitrarily small $\varepsilon>0$.
\item Prove estimates of the various norms based on the adapted (for $\varepsilon>0$) equation and on entropy estimates as above, and establish that these estimates can be obtained uniformly in the limit as $\varepsilon\to0$.
\item Get inequalities (with degraded constants for $\varepsilon>0$) and recover entropy -- entropy production inequality by taking the limit as $\varepsilon\to0$.
\item Conclude by density on the inital datum, in order to prove the result in the Sobolev space of Theorem~\ref{Thm:Main1}.
\end{enumerate}
Details are out of the scope of the present paper. None of these steps is extremely difficult but lots of care is needed. Regularity and justification of the integrations by parts is a standard issue in this class of problems, see for instance the comments in~\cite[page~694]{MR2459454}.

\subsection{An improvement of the interpolation inequality}\label{Sec:improvement}

The parabolic approach provides an easy improvement of~\eqref{Interp} and~\eqref{LogSob}. Let us consider the function~$\Psi$ defined by
\[
\Psi(\ez)=\ez+\frac{(p-1)^2}{2\,(2\,p-1)}\,(1+q-p)\,\lambda_1^{p-2}\int_0^{\ez}\frac{s^{p-1}}{1+(p-q)\,s}\,ds\,.
\]
The function $\Psi$ is defined on $\R^+$, convex and such that $\Psi(0)=0$ and $\Psi'(0)=1$. The flow approach provides us with an improved version of Theorem~\ref{Thm:Main1}.
\begin{theorem}\label{Thm:Main2} Assume that $p\in(2,+\infty)$ and $q>p-1$. For any function $u\in\mathrm W^{1,p}(\S^1)\setminus\{0\}$, the following inequalities hold:
\[
\nrmS{u'}p^2\ge\lambda_1\,\nrmS uq^2\,\Psi\(\frac1{p-q}\,\frac{\nrmS up^2-\nrmS uq^2}{\nrmS uq^2}\)
\]
if $p\neq q$, and
\[
\nrmS{u'}p^2\ge\lambda_1\,\nrmS up^2\,\Psi\(\frac2p\iS{\frac{|u|^p}{\nrmS up^p}\,\log\(\frac{|u|}{\nrmS up}\)}\)
\]
if $p=q$.\end{theorem}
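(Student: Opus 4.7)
The plan is to redo the parabolic proof of Theorem~\ref{Thm:Main1} from Section~\ref{Sec:Parabolic}, this time retaining, rather than discarding, the nonnegative remainder in the inequality~\eqref{didt}. I would first treat $p\neq q$: by homogeneity one may normalize $\nrmS uq=1$ (an identity preserved along the flow~\eqref{flow}), so that $\mathsf e=(\nrmS up^2-1)/(p-q)$ and $\mathsf i=\nrmS{u'}p^2$ obey $\mathsf e'=-\,2\,\mathsf i$ and tend to $0$ as $t\to+\infty$. Keeping the remainder in~\eqref{didt} gives the refined entropy production inequality
\[
\mathsf i' \;\le\; -\,2\,\lambda_1\,\mathsf i \;-\; 2\,(1+q-p)\,\frac{(p-1)^2}{2\,p-1}\,\frac{\mathsf i^{2-p}}{\nrmS up^{2-p}}\iS{\frac{|u'|^{2p}}{u^p}}\,.
\]

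The next step is to bound this remainder from below so as to reach $\mathsf i'\le-\,2\,\lambda_1\,\Psi'(\mathsf e)\,\mathsf i$. Cauchy-Schwarz yields $\iS{|u'|^{2p}/u^p}\ge\mathsf i^p/\nrmS up^p$, i.e.\ a contribution of the form $-\,2\,C\,\mathsf i^2/\nrmS up^2$ with $C=(1+q-p)\,(p-1)^2/(2\,p-1)$. To match exactly $\Psi'(z)-1=\frac{(p-1)^2\,(1+q-p)\,\lambda_1^{p-2}}{2\,(2\,p-1)}\cdot\frac{z^{p-1}}{1+(p-q)\,z}$ from the statement, I would interpolate this against the Poincar\'e-type inequality $\mathsf i\ge\lambda_1\,\mathsf e$ already granted by Theorem~\ref{Thm:Main1}, trading factors of $\mathsf i$ on the right-hand side for factors of $\lambda_1\,\mathsf e$ until the exponent $\mathsf e^{p-1}$ and the coefficient $\lambda_1^{p-1}/2$ of $\Psi'(\mathsf e)-1$ are both reached. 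Once this is in place, $\mathsf e'=-\,2\,\mathsf i$ recasts the inequality as $\frac{d}{dt}\big(\mathsf i-\lambda_1\,\Psi(\mathsf e)\big)\le 0$; since $\Psi(0)=0$ and $(\mathsf e,\mathsf i)\to(0,0)$ at infinity, one obtains $\mathsf i(0)\ge\lambda_1\,\Psi(\mathsf e(0))$ for an arbitrary initial datum, which is Theorem~\ref{Thm:Main2} when $p\neq q$. The case $p=q$ then follows by passing to the limit $q\to p$, exactly as at the end of the proof of Theorem~\ref{Thm:Main1}.

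The main obstacle will be this interpolation in the second step. Cauchy-Schwarz alone delivers only an $\mathsf i^2$ correction — corresponding to an $\mathsf e^2$ improvement in $\Psi$ — whereas the stated $\Psi$ features $\mathsf e^{p-1}$, so the Poincar\'e bound $\mathsf i\ge\lambda_1\,\mathsf e$ must be combined with Cauchy-Schwarz in the precise way that reproduces the coefficient $\frac{\lambda_1^{p-1}\,(p-1)^2\,(1+q-p)}{2\,(2\,p-1)}$. Regularity, positivity, and the vanishing-$\varepsilon$ regularization needed to make the parabolic computations rigorous are standard and follow the route indicated at the end of Section~\ref{Sec:Parabolic}, as does the density argument transferring the inequality to all of $\mathrm W^{1,p}(\S^1)\setminus\{0\}$.
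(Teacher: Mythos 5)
Your overall route is the paper's own: run the $1$-homogeneous flow~\eqref{flow} with $\nrmS uq=1$, keep the remainder in~\eqref{didt}, bound it from below by Cauchy--Schwarz, convert the result into a second-order differential inequality for $\mathsf e$ with the help of $\mathsf i\ge\lambda_1\,\mathsf e$, and integrate $\frac d{dt}\big(\mathsf i-\lambda_1\,\Psi(\mathsf e)\big)\le0$ from the trivial limit at $t=+\infty$. (The only cosmetic difference is that the paper treats $p=q$ directly with the logarithmic entropy along the same flow rather than by letting $q\to p$; both work.) The problem is that the step you yourself single out as ``the main obstacle'' is the entire content of the theorem, and the way you have set it up it cannot be closed. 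After Cauchy--Schwarz you reduce the remainder to a multiple of $\mathsf i^2/\nrmS up^2=\mathsf i^2/(1+(p-q)\,\mathsf e)$. Writing $\mathsf i^2=\mathsf i\cdot\mathsf i$ and applying $\mathsf i\ge\lambda_1\,\mathsf e$ to the spare factor yields only a \emph{first}-power correction $\mathsf i\cdot\lambda_1\,\mathsf e$; no amount of ``trading factors'' converts one spare factor of $\mathsf i$ into $\lambda_1^{p-1}\,\mathsf e^{p-1}$, because $\mathsf i\ge\lambda_1\,\mathsf e$ does \emph{not} imply $\mathsf i\ge(\lambda_1\,\mathsf e)^{p-1}$ once $\lambda_1\,\mathsf e>1$ (and $\mathsf e$ is not small in general: for $q<p$ it is even unbounded). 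So the interpolation you defer to is not merely technical; as described, it produces the wrong exponent.

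The paper's mechanism is different at exactly this point. The Cauchy--Schwarz bound is kept as a $p$-th power of the Fisher information, $\iS{|u'|^{2p}/u^p}\ge\mathsf i^p/(1+(p-q)\,\mathsf e)$, so that the retained term in the differential inequality is proportional to $|\mathsf e'|^2\,\mathsf i^{p-2}/(1+(p-q)\,\mathsf e)$, i.e.\ to $(-\mathsf e')\cdot\mathsf i^{p-1}/(1+(p-q)\,\mathsf e)$. One factor $\mathsf i=-\mathsf e'/2$ is paired with $\mathsf e'$ so as to reconstitute $\Psi'(\mathsf e)\,\mathsf e'$, and the remaining $p-1$ factors are bounded below by $(\lambda_1\,\mathsf e)^{p-1}$ using the already established entropy--entropy production inequality; this is the estimate $|\mathsf e'|^2\,\mathsf i^{p-2}\ge-\,\lambda_1^{p-1}\,\mathsf e^{p-1}\,\mathsf e'$ stated explicitly at the end of Section~\ref{Sec:improvement}, and it is the sole source of the factor $\lambda_1^{p-2}$ and of the exponent $p-1$ in the definition of $\Psi$. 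To repair your argument you must therefore \emph{not} collapse the remainder to $\mathsf i^2$ before pairing one power of $\mathsf i$ with $\mathsf e'$: keep the full power $\mathsf i^p$, peel off a single $\mathsf i$ as $-\mathsf e'/2$, and only then apply $\mathsf i\ge\lambda_1\,\mathsf e$ to the residual $\mathsf i^{p-1}$. (You should also track the prefactor $\nrmS{u'}p^{2(2-p)}/\nrmS up^{2-p}$ in~\eqref{didt} carefully when doing so, as the exponent bookkeeping is delicate precisely here.)
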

\begin{proof} In the computations of Section~\ref{Sec:Parabolic},~\eqref{didt}, we dropped the term $\iS{\frac{|u'|^{2p}}{u^p}}$. Actually, the Cauchy-Schwarz inequality
\[
\(\iS{|u'|^p}\)^2=\(\iS{u^\frac p2\cdot u^{-\frac p2}|u'|^p}\)^2\le\iS{u^p}\iS{\frac{|u'|^{2p}}{u^p}}
\]
can be used as in~\cite{MR2152502} to prove that
\[
\iS{\frac{|u'|^{2p}}{u^p}}\ge\frac{\(\iS{|u'|^p}\)^2}{\iS{u^p}}=\frac{\mathsf i^p}{1+(p-q)\,\mathsf e}
\]
with $\mathsf i=\nrmS{u'}p^2$ and, after recalling that $\mathsf e'=-\,2\,\mathsf i$, deduce from~\eqref{didt} that
\[\label{ODI}
\mathsf e''+2\,\lambda_1\,\mathsf e'-\,2\,\kappa\,\frac{|\mathsf e'|^2\,\mathsf i^{p-2}}{1+(p-q)\,\mathsf e}\ge0
\]
with $\kappa=\frac{(p-1)^2}{2\,(2\,p-1)}\,(1+q-p)$. Here we assume that $\nrmS uq=1$ so that $\nrmS up^2=1+(p-q)\,\mathsf e$. Using the standard entropy -- entropy production inequality $\mathsf i-\,\lambda_1\,\mathsf e\ge0$, we deduce that
\[
\mathsf e''+\,2\,\lambda_1\(1+\,\kappa\,\lambda_1^{p-2}\,\frac{\mathsf e^{p-1}}{1+(p-q)\,\mathsf e}\)\mathsf e'\ge0\,,
\]
that is,
\[
\frac d{dt}\(\mathsf i-\,\lambda_1\,\Psi(\mathsf e)\)\le0\,,
\]
and the result again follows from $\lim_{t\to+\infty}\left[\mathsf i(t)-\,\lambda_1\,\Psi\big(\mathsf e(t)\big)\right]=0$ if $\nrmS uq=1$. The general case is obtained by replacing $u$ by $u/\nrmS uq=1$.
\end{proof}

We notice that the equality in~\eqref{Interp} can be achieved only by constants because
\[
\mathsf i-\,\lambda_1\,\mathsf e\ge\lambda_1\,\big(\Psi(\mathsf e)-\mathsf e\big)\ge0
\]
and $\Psi(\ez)-\ez=0$ is possible if and only if $\ez=0$. This explains why the infima of $\mathcal Q_{\lambda_1}$ and $\mathcal Q^{\lambda_1}$ are achieved only by constant functions. In the case $p=q$, let us notice that the \emph{improved interpolation inequality} of Theorem~\ref{Thm:Main2} can be written as
\begin{multline*}
\nrmS{u'}p^2\ge\frac{2\,\lambda_1}p\,\nrmS up^{2-p}\iS{|u|^p\,\log\(\frac{|u|}{\nrmS up}\)}\\
+\lambda_1\,\mathsf a\(\frac2p\,\nrmS up^{2-p}\iS{|u|^p\,\log\(\frac{|u|}{\nrmS up}\)}\)^p
\end{multline*}
with $\mathsf a:=\frac{(p-1)^2}{2\,p\,(2\,p-1)}\,(1+q-p)\,\lambda_1^{p-2}$.

In the above computations, we use the estimate $\mathsf i-\,\lambda_1\,\mathsf e\ge0$ to show that $|\mathsf e'|^2\,\mathsf i^{p-2}\ge-\lambda_1^{p-1}\,\mathsf e^{p-1}\,\mathsf e'$. This estimate is crude and there is space for improvement.

\subsection{Keller-Lieb-Thirring estimates}\label{Sec:Keller-Lieb-Thirring}

The nonlinear interpolation inequalities \eqref{Interp-1} and~\eqref{Interp-2} can be used to get estimates of the \emph{ground state energy} of Keller-Lieb-Thirring type as, for instance, in~\cite{DolEsLa-APDE2014}.

\medskip In the range $2<p<q$, by applying H\"older's inequality, we find that
\[
\iS{V\,|u|^p}\le\nrmS V{\frac q{q-p}}\,\nrmS uq^p\,.
\]
We can rewrite~\eqref{Interp-1} as
\[
\nrmS{u'}p^2-\mu\,\nrmS uq^2\ge-\,\lambda\,\nrmS up^2
\]
with $\mu=\nrmS V{\frac q{q-p}}$ and $\lambda=\lambda(\mu)$ computed as the inverse of the function $\lambda\mapsto\lambda(\mu)$, according to Proposition~\ref{Prop:S1}. As a consequence of Propositions~\ref{Prop:S1} and~\ref{Prop:M1}, we have the following estimate on the \emph{ground state energy}.
\begin{corollary}\label{Cor:KLT1} Assume that $2<p<q$. With the above notations, for any function $V\in\mathrm L^{\frac q{q-p}}(\S^1)$ and any $u\in\mathrm W^{1,p}(\S^1)$, we have the estimate
\[
\nrmS{u'}p^2-\iS{V\,|u|^p}\ge-\,\lambda\(\nrmS V{\frac q{q-p}}\)\,\nrmS up^2\,.
\]
Moreover,
\[
\lambda\(\nrmS V{\frac q{q-p}}\)=\nrmS V{\frac q{q-p}}\quad\mbox{if}\quad\nrmS V{\frac q{q-p}}\le\frac{\lambda_1}{q-p}
\]
and in that case, the equality is realized if and only if $V$ is constant.
\end{corollary}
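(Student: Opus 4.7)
The plan is to combine a H\"older bound for the potential term with the interpolation inequality~\eqref{Interp-1}, invert the map $\lambda\mapsto\mu(\lambda)$ provided by Proposition~\ref{Prop:S1}, and then call upon the rigidity statement of Proposition~\ref{Prop:M1} to obtain the explicit relation $\lambda(\mu)=\mu$ in the small-potential regime.

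First, H\"older's inequality with conjugate exponents $q/(q-p)$ and $q/p$ yields
\[
\iS{V\,|u|^p}\le\nrmS V{\frac q{q-p}}\,\nrmS uq^p\,.
\]
Set $\mu:=\nrmS V{q/(q-p)}$. By Proposition~\ref{Prop:S1}, $\lambda\mapsto\mu(\lambda)$ is concave and strictly increasing on $(0,+\infty)$; one checks in addition that $\mu(\lambda)\to 0$ as $\lambda\to 0^+$ (by testing $\mathcal Q_\lambda$ on constants) and $\mu(\lambda)\to+\infty$ as $\lambda\to+\infty$, so the map admits a continuous inverse which I denote by $\mu\mapsto\lambda(\mu)$, and I set $\lambda:=\lambda(\mu)$. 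Under the normalization $\nrmS uq=1$, inequality~\eqref{Interp-1} becomes
\[
\nrmS{u'}p^2+\lambda\,\nrmS up^2\ge\mu(\lambda)=\mu\ge\iS{V\,|u|^p}\,,
\]
and rearranging produces the announced lower bound; the general case is recovered by homogeneity.

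For the explicit identification in the small-norm range, I would combine Propositions~\ref{Prop:S1} and~\ref{Prop:M1}: the rigidity result forces $\mu(\lambda)=\lambda$ on $\lambda\le\lambda_1/(q-p)$, hence by inversion $\lambda(\mu)=\mu$ on the same interval. For the equality characterization, I would observe that H\"older saturates exactly when $V^{q/(q-p)}$ and $|u|^q$ are proportional a.e., while~\eqref{Interp-1} saturates in the rigidity regime only for constant $u$ (by Proposition~\ref{Prop:M1}); combined, these two requirements force $V$ itself to be constant.

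I expect the main subtlety to lie in keeping track of the scaling: H\"older delivers the factor $\nrmS uq^p$ whereas~\eqref{Interp-1} has $\nrmS uq^2$ on its right-hand side, so the clean combination is most naturally performed in the normalized setting $\nrmS uq=1$ and then lifted back to arbitrary $u$. A secondary point that needs attention is the invertibility of $\lambda\mapsto\mu(\lambda)$ on the full half-line $(0,+\infty)$, which requires limits at $0$ and $\infty$ beyond the monotonicity explicitly stated in Proposition~\ref{Prop:S1}.
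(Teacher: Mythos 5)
Your proof follows the paper's route exactly: H\"older's inequality with exponents $\frac q{q-p}$ and $\frac qp$, inequality~\eqref{Interp-1} read through the inverse of $\lambda\mapsto\mu(\lambda)$ from Proposition~\ref{Prop:S1}, and the rigidity of Proposition~\ref{Prop:M1} both for the identification $\lambda(\mu)=\mu$ when $\mu\le\lambda_1/(q-p)$ and for the equality case; the details you add (the limits of $\mu(\lambda)$ at $0$ and $+\infty$ needed for invertibility, and the saturation conditions in H\"older) are correct and are left implicit in the paper. The only caveat concerns the exponent mismatch you yourself flag: since $\nrmS{u'}p^2-\iS{V\,|u|^p}$ is not homogeneous of a single degree in $u$, the final ``lift by homogeneity'' from $\nrmS uq=1$ to arbitrary $u$ does not literally go through (one gets $\nrmS uq^{2-p}\iS{V\,|u|^p}$ rather than $\iS{V\,|u|^p}$) --- but this is an imprecision in how the corollary is stated that the paper's own two-line derivation shares, not a defect specific to your argument.
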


If $p>2$ and $p-1<q<p$, there is a similar estimate, which goes as follows. By applying H\"older's inequality, we find that
\[
\iS{|u|^q}=\iS{V^{-\frac qp}\,V^\frac qp\,|u|^q}\le\(\iS{V^{-\frac q{p-q}}}\)^{1-\frac qp}\(\iS{V\,|u|^p}\)^\frac qp
\]
so that
\[
\iS{V\,|u|^p}\ge\nrmS{V^{-1}}{\frac q{p-q}}^{-1}\,\nrmS uq^p\,.
\]
We can rewrite~\eqref{Interp-2} as
\[
\nrmS{u'}p^2+\mu\,\nrmS uq^2\ge\lambda\,\nrmS up^2
\]
with $\mu=\nrmS{V^{-1}}{\frac q{p-q}}^{-1}$ and $\lambda=\lambda(\mu)$, according to Proposition~\ref{Prop:S1b}. As a consequence of Propositions~\ref{Prop:S1b} and~\ref{Prop:M2}, we have the following estimate on the \emph{ground state energy}.
\begin{corollary}\label{Cor:KLT2} Assume that $p>2$ and $p-1<q<p$. With the above notations, for any function $V\in\mathrm L^{\frac q{q-p}}(\S^1)$ and any $u\in\mathrm W^{1,p}(\S^1)$, we have the estimate
\[
\nrmS{u'}p^2+\iS{V\,|u|^p}\ge\lambda\(\nrmS{V^{-1}}{\frac q{p-q}}^{-1}\)\,\nrmS up^2\,.
\]
Moreover,
\[
\lambda\(\nrmS{V^{-1}}{\frac q{p-q}}^{-1}\)=\nrmS{V^{-1}}{\frac q{p-q}}^{-1}\quad\mbox{if}\quad\nrmS{V^{-1}}{\frac q{p-q}}^{-1}\le\frac{\lambda_1}{p-q}
\]
and in that case, the equality is realized if and only if $V$ is constant.
\end{corollary}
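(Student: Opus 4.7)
The plan is to combine the H\"older bound derived just above the statement with the interpolation inequality~\eqref{Interp-2} and to invoke Proposition~\ref{Prop:M2} to pin down the sharp constant in the subcritical range. The argument mirrors that of Corollary~\ref{Cor:KLT1}, with the H\"older step replaced by its dual form using $V^{-1}$ in place of $V$ to suit the regime $p-1<q<p$.

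I would start from the inequality $\iS{V\,|u|^p}\ge\mu\,\nrmS uq^p$ with $\mu:=\nrmS{V^{-1}}{\frac q{p-q}}^{-1}$ already recorded in the text, let $\lambda(\mu)$ be given by Proposition~\ref{Prop:S1b}, and reduce to the normalization $\nrmS uq=1$ by the two-homogeneity of the interpolation inequality $\nrmS{u'}p^2+\mu\,\nrmS uq^2\ge\lambda(\mu)\,\nrmS up^2$. Under this normalization $\iS{V\,|u|^p}\ge\mu=\mu\,\nrmS uq^2$, and combination with~\eqref{Interp-2} yields the chain
\[
\nrmS{u'}p^2+\iS{V\,|u|^p}\ge\nrmS{u'}p^2+\mu\,\nrmS uq^2\ge\lambda(\mu)\,\nrmS up^2,
\]
which is the Keller-Lieb-Thirring estimate. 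For the identification $\lambda(\mu)=\mu$ in the range $\mu\le\lambda_1/(p-q)$, Proposition~\ref{Prop:M2} implies that the only positive solutions of~\eqref{ELS0b} are constants, hence the infimum defining $\lambda(\mu)$ is realized on constants, for which $\mathcal Q^\mu[u]=\mu$.

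The equality case in the rigidity range follows by backtracking: equality in the interpolation inequality forces $u$ to be constant, and then equality in H\"older with $u$ constant is equivalent to $V^{-q/(p-q)}\propto V$, which forces $V$ itself to be constant; conversely, if $V$ is constant then any constant $u$ saturates every inequality in the chain. The main obstacle I anticipate is reconciling the $p$-th power of $\nrmS uq$ produced by H\"older with the quadratic exponent of $\nrmS uq$ appearing in~\eqref{Interp-2}: the natural normalization $\nrmS uq=1$ that makes both consistent has to be threaded carefully through the equality analysis so that the two saturation conditions can only coexist when both $u$ and $V$ are constant.
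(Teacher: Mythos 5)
Your argument is exactly the paper's: the dual H\"older bound $\iS{V\,|u|^p}\ge\mu\,\nrmS uq^p$ with $\mu=\nrmS{V^{-1}}{\frac q{p-q}}^{-1}$, combined with~\eqref{Interp-2}, and Propositions~\ref{Prop:S1b} and~\ref{Prop:M2} to obtain $\lambda(\mu)=\mu$ and the rigidity of the equality case when $\mu\le\lambda_1/(p-q)$. The exponent mismatch you flag is genuine --- the displayed estimate is not homogeneous in $u$ (degree $2$ versus degree $p$), so it must be read under the normalization $\nrmS uq=1$ and cannot be recovered for general $u$ by scaling --- but this caveat is equally present, and silently glossed over, in the paper's own two-line derivation, so your treatment is if anything the more careful of the two.
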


\section{Numerical results}\label{Sec:numerics}

Equation~\eqref{ELS0} involves non-local terms, which raises a numerical difficulty. However, using the homogeneity and a scaling, it is possible to formulate an equivalent equation without non-local terms and use it to perform some numerical computations.

\subsection{A reparametrization}

A solution of~\eqref{ELS0} can be seen as $2\pi$-periodic solution on $\R$. By the rescaling
\be{Rescaling}
u(x)=K\,f\(\frac T{2\pi}\,(x-x_0)\)\,,
\ee
we get that $f$ solves
\[
-\(\frac T{2\pi}\)^p\,\nrmS{u'}p^{2-p}\,K^{p-1}\Lp f+\lambda\,\nrmS up^{2-p}\,K^{p-1}f^{p-1}=\mu(\lambda)\,\nrmS uq^{2-q}\,K^{q-1}\,f^{q-1}\,.
\]
We can adjust $T$ so that
\[
\(\frac T{2\pi}\)^p\,\nrmS{u'}p^{2-p}=\lambda\,\nrmS up^{2-p}
\]
and $K$ so that
\[
K^{q-p}\,\mu(\lambda)\,\nrmS uq^{2-q}=\lambda\,\nrmS up^{2-p}\,.
\]
Altogether, this means that the function $f$ now solves
\be{ELS2}
-\,\Lp f+f^{p-1}=f^{q-1}
\ee
on $\R$ and is $T$-periodic. Equations~\eqref{ELS0} and~\eqref{ELS2} are actually equivalent.
\begin{proposition}\label{Prop:Equivalence} Assume that $p\in(2,+\infty)$ and $q>p-1$. If $u>0$ solves~\eqref{ELS0} and $f$ is given by~\eqref{Rescaling} with $x_0\in\R$,
\[
T=2\pi\,\lambda^\frac1p\,\frac{\nrmS{u'}p^{1-\frac2p}}{\nrmS up^{1-\frac2p}}\quad\mbox{and}\quad K=\(\frac\lambda{\mu(\lambda)}\,\frac{\nrmS uq^{q-2}}{\nrmS up^{p-2}}\)^\frac1{q-p}\,,
\]
then $f$ solves~\eqref{ELS2} and it is $T$-periodic. Reciprocally, if $f$ is a $T$-periodic positive solution of~\eqref{ELS2}, then~$u$ given by~\eqref{Rescaling} is, for an arbitrary $x_0\in\R$, and an arbitrary $K>0$, a $2\pi$-periodic positive solution of~\eqref{ELS0} with
\[
\lambda=\(\frac T{2\pi}\)^2\,\frac{\nrmT fp^{p-2}}{\nrmT{f'}p^{p-2}}
\]
and
\[
\mu(\lambda)=\lambda\,T^{\frac2q-\frac2p}\,\frac{\nrmT fq^{q-2}}{\nrmT fp^{p-2}}=\(\frac T{2\pi}\)^2\,T^{\frac2q-\frac2p}\,\frac{\nrmT fq^{q-2}}{\nrmT{f'}p^{p-2}}\,.
\]
\end{proposition}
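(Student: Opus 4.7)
The plan is to prove both implications by a direct change of variables, exploiting the scaling behavior of $\Lp$ and of the various norms under~\eqref{Rescaling}. Setting $y=\frac T{2\pi}(x-x_0)$, the chain rule gives $u'(x)=K\,\frac T{2\pi}\,f'(y)$ and, after one further $x$-differentiation of $|u'|^{p-2}\,u'$,
\[
\Lp u(x)=K^{p-1}\(\frac T{2\pi}\)^p\Lp f(y)\,.
\]
Periodicity is automatic in both directions: $u$ is $2\pi$-periodic in $x$ if and only if $f$ is $T$-periodic in $y$.

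For the forward direction I would substitute this identity, together with $u^{p-1}(x)=K^{p-1}f^{p-1}(y)$ and $u^{q-1}(x)=K^{q-1}f^{q-1}(y)$, into~\eqref{ELS0} and factor out one power $K^{p-1}$, obtaining
\[
-\,\nrmS{u'}p^{2-p}\(\frac T{2\pi}\)^p\Lp f+\lambda\,\nrmS up^{2-p}f^{p-1}=K^{q-p}\,\mu(\lambda)\,\nrmS uq^{2-q}f^{q-1}\,.
\]
The condition on $T$ in the statement equalizes the coefficients of $-\Lp f$ and of $f^{p-1}$, while the condition on $K$ then equalizes the coefficient of $f^{q-1}$ with the common value; dividing by this common factor leaves exactly~\eqref{ELS2}.

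For the reverse direction I would carry out the same substitution, now with \emph{arbitrary} $K>0$ and $x_0\in\R$, to convert~\eqref{ELS2} into
\[
-\,\Lp u+\(\frac T{2\pi}\)^p u^{p-1}=K^{p-q}\(\frac T{2\pi}\)^p u^{q-1}\,.
\]
Multiplying through by $\nrmS{u'}p^{2-p}$ puts this equation into the form~\eqref{ELS0} with
\[
\lambda=\(\frac T{2\pi}\)^p\frac{\nrmS{u'}p^{2-p}}{\nrmS up^{2-p}}\quad\mbox{and}\quad\mu(\lambda)=K^{p-q}\(\frac T{2\pi}\)^p\frac{\nrmS{u'}p^{2-p}}{\nrmS uq^{2-q}}\,.
\]
It then remains to re-express these constants in terms of $f$. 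A straightforward change of variable $x\mapsto y$ in $\int_0^{2\pi}|u|^r\,dx/(2\pi)$ yields $\nrmS up^p=K^p\,\nrmT fp^p/T$, $\nrmS uq^q=K^q\,\nrmT fq^q/T$, and $\nrmS{u'}p^p=K^p\,(T/2\pi)^p\,\nrmT{f'}p^p/T$. Inserting these into the two displayed formulas collapses $\lambda$ to $(T/2\pi)^2\,\nrmT fp^{p-2}/\nrmT{f'}p^{p-2}$, and the factor $K^{p-q}$ in $\mu(\lambda)$ cancels with a factor $K^{q-p}$ produced by the ratio $\nrmS uq^{q-2}/\nrmS up^{p-2}$, leaving $\mu(\lambda)=\lambda\,T^{\frac2q-\frac2p}\,\nrmT fq^{q-2}/\nrmT fp^{p-2}$.

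Conceptually the proof is a routine substitution argument. The only real obstacle is algebraic bookkeeping: one must carefully track the powers of $K$, $T$, and $2\pi$, and convert between $\mathrm L^r(\S^1)$-norms of $u$ and $\mathrm L^r(0,T)$-norms of $f$, in order to see the cancellation of $K$ in the final formula for $\mu(\lambda)$. This cancellation is exactly what makes the proposition consistent, since $\mu(\lambda)$ must depend only on $\lambda$ and not on the arbitrary scaling parameter $K$, and it reflects the homogeneity built into the non-local coefficients $\nrmS{u'}p^{2-p}$, $\nrmS up^{2-p}$, and $\nrmS uq^{2-q}$ of~\eqref{ELS0}.
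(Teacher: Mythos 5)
Your proposal is correct and follows essentially the same route as the paper: a direct substitution using the scaling identity $\Lp u=K^{p-1}(T/2\pi)^p\,\Lp f$, the choices of $T$ and $K$ to normalize the coefficients in the forward direction, and the conversion of $\mathrm L^r(\S^1)$-norms (with respect to $d\sigma$) into $\mathrm L^r(0,T)$-norms (with respect to Lebesgue measure) to identify $\lambda$ and $\mu(\lambda)$ in the reverse direction, including the cancellation of $K$. Your bookkeeping checks out against the paper's stated norm relations, so there is nothing to add.
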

\begin{proof} To see that $u(x)=f(T\,x\,/(2\pi))$ solves~\eqref{ELS0}, it is enough to write~\eqref{ELS2} in terms of $u$ and use the change of variables to get that
\begin{multline*}
\nrmS{u'}p=\frac1{2\pi}\,T^{1-\frac1p}\,\nrmT{f'}p\,,\quad\nrmS up=T^{-\frac1p}\,\nrmT fp\,,\\
\mbox{and}\quad\nrmS uq=T^{-\frac1q}\,\nrmT fq\,.
\end{multline*}
Notice that on $\S^1$, we use the uniform probability measure $d\sigma$, while on $[0,T]$ we use the standard Lebesgue measure. We can of course translate $u$ by $x_0\in\R$ or multiply it by an arbitrary $K>0$ (or an arbitrary $K\in\R$ if we relax the positivity condition).\end{proof}

Proposition~\ref{Prop:M1} and Proposition~\ref{Prop:Equivalence} have a straightforward consequence on the period of the solutions of~\eqref{ELS2}.
\begin{corollary}\label{Cor:rigidity} Assume that $2<p<q$. If $f$ is a non-constant periodic solution of~\eqref{ELS2} of period $T$, then
\[
T>2\,\pi\,\sqrt{\frac{\lambda_1}{q-p}}\,\frac{\nrmT{f'}p^{\frac p2-1}}{\nrmT fp^{\frac p2-1}}\,.
\]
\end{corollary}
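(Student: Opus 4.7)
The plan is to deduce Corollary~\ref{Cor:rigidity} directly from Proposition~\ref{Prop:M1} by exploiting the dictionary between $T$-periodic positive solutions of~\eqref{ELS2} on $\R$ and positive $2\pi$-periodic solutions of~\eqref{ELS0} on $\S^1$ established in Proposition~\ref{Prop:Equivalence}. The point is that non-constancy is preserved by the rescaling~\eqref{Rescaling}, so a non-constant $T$-periodic $f$ produces a non-constant~$u$, which is forbidden by the rigidity range of Proposition~\ref{Prop:M1}; this forces the corresponding value of $\lambda$ to lie outside that range, and that constraint is precisely an inequality on $T$.

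First I would fix a non-constant $T$-periodic positive solution $f$ of~\eqref{ELS2} and, for any admissible choice of $K>0$ and $x_0\in\R$, set $u(x):=K\,f\!\(T(x-x_0)/(2\pi)\)$. By the second part of Proposition~\ref{Prop:Equivalence}, $u$ is then a non-constant positive $2\pi$-periodic solution of~\eqref{ELS0} with
\[
\lambda=\(\frac T{2\pi}\)^2\,\frac{\nrmT fp^{p-2}}{\nrmT{f'}p^{p-2}}.
\]
Next I would invoke Proposition~\ref{Prop:M1}: since only constants solve~\eqref{ELS0} when $\lambda\le\lambda_1/(q-p)$, the existence of the non-constant solution $u$ forces the strict inequality
\[
\(\frac T{2\pi}\)^2\,\frac{\nrmT fp^{p-2}}{\nrmT{f'}p^{p-2}}>\frac{\lambda_1}{q-p}.
\]

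Finally, I would simply rearrange this: multiplying by $\nrmT{f'}p^{p-2}/\nrmT fp^{p-2}$ and taking the positive square root (using $p>2$ so the exponent $(p-2)/2=p/2-1$ is positive and the expression makes sense) yields
\[
T>2\pi\,\sqrt{\frac{\lambda_1}{q-p}}\,\frac{\nrmT{f'}p^{\frac p2-1}}{\nrmT fp^{\frac p2-1}},
\]
which is the claimed bound. There is really no obstacle in this argument beyond bookkeeping — the substantive content sits entirely in Proposition~\ref{Prop:M1}, which has already been proved via the Poincar\'e estimate of Lemma~\ref{Lem:CS}; the only thing one must be careful about is to note that the correspondence of Proposition~\ref{Prop:Equivalence} is valid for any $K>0$ and $x_0\in\R$, so the value of $\lambda$ is intrinsic to $f$ (independent of these choices), which makes the above inequality a genuine statement about the period $T$ and the norms of $f$ and $f'$ alone.
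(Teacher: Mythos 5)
Your argument is exactly the one the paper intends: the corollary is stated as a direct consequence of Propositions~\ref{Prop:M1} and~\ref{Prop:Equivalence}, and your combination of the two (non-constant $f$ yields a non-constant $u$ solving~\eqref{ELS0}, so $\lambda=(T/2\pi)^2\,\nrmT fp^{p-2}/\nrmT{f'}p^{p-2}$ must exceed $\lambda_1/(q-p)$, then rearrange) is correct, including the observation that $\lambda$ does not depend on $K$ or $x_0$. No gaps.
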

A similar result also holds in the case $p>2$ and $p-1<q<p$.

\subsection{A Hamiltonian reformulation}\label{Sec:Ham}

Assume that $2<p<q$. Eq.~\eqref{ELS2} can be reformulated as a Hamiltonian system by writing $f=X$ and
\be{Eqns:Ham}\begin{array}{l}
Y=|X'|^{p-2}\,X'\;\Longleftrightarrow\;X'=|Y|^{\frac p{p-1}-2}\,Y\,,\\[4pt]
Y'=|X|^{p-2}\,X-|X|^{q-2}\,X\,.
\end{array}\ee
The energy
\[
H(X,Y)=(p-1)\,|Y|^\frac p{p-1}+\frac pq\,|X|^q-|X|^p
\]
is conserved and positive solutions are determined by the condition $\min H=\frac pq-1\le H<0$. Hence a shooting method with initial data
\[
X(0)=a\quad\mbox{and}\quad Y(0)=0
\]
provides all positive solutions (up to a translation) if $a\in(0,1]$. For clarity, we shall denote the corresponding solution by $X_a$ and $Y_a$. Some solutions of the Hamiltonian system and the corresponding vector field are shown in Fig.~\ref{F0}.

\begin{figure}[ht]
\begin{center}
\includegraphics[width=8cm,height=6cm]{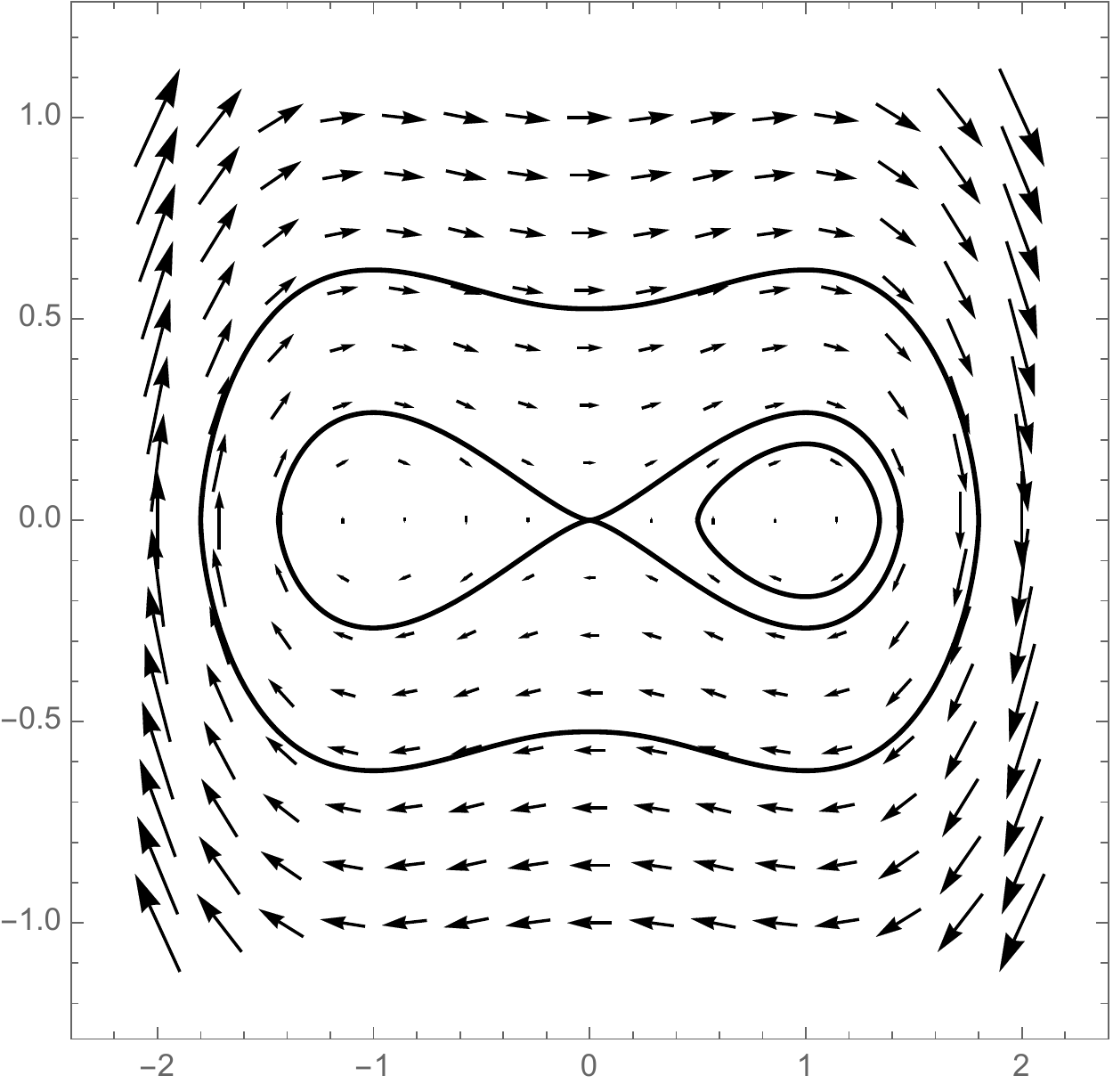}
\caption{\label{F0} The vector field $(X,Y)\mapsto(|Y|^{\frac p{p-1}-2}\,Y,|X|^{p-2}\,X-|X|^{q-2}\,X)$ and periodic trajectories corresponding to $a=1.35$ (with positive $X$) and $a=1.8$ (with sign-changing $X$) are shown for $p=2.5$ and $q=3$. The zero-energy level is also shown.}
\end{center}
\end{figure}

\subsection*{The numerical computation of the branches}

Assume that $2<p<q$ and let us denote by $f_a$ the solution~of
\[
-\,\Lp f+f^{p-1}=f^{q-1}\,,\quad f'(0)=0\,,\quad f(0)=a\,.
\]
We learn from the Hamiltonian reformulation that $H(f_a(r),f_a'(r))$ is independent of~$r$, where $H(X,Y)=(p-1)\,|Y|^\frac p{p-1}+p\,V(X)$ where $V(X):=\frac1q\,|X|^q-\frac1p\,|X|^p$. Since we are interested only in positive solutions, it is necessary that $H(a,0)<0$, which means that we can parametrize all non-constant solutions by $a\in(0,1)$. Let $b(a)\in\(1,(q/p)^{1/(q-p)}\)$ be the other positive solution of $V(b)=V(a)$.

If $T_a$ denotes the period of $f_a$, then we know that $f_a'$ is positive on the interval $(0,T_a/2)$ and can compute it using the identity $H(f_a,f_a')=V(a)$ as
\[
f_a'(r)=\(\frac p{p-1}\,\Big(V(a)-V\big(f_a(r)\big)\Big)\)^\frac1p\,.
\]
This allows to compute $T_a$ as
\[
T_a=2\int_0^{T_a/2}dr=\int_a^{b(a)}\(\frac p{p-1}\,\Big(V(a)-V\big(X\big)\Big)\)^{-\frac1p}\,dX\,.
\]
See Fig.~\ref{F1}.

\begin{figure}[ht]
\begin{center}
\includegraphics[width=8cm]{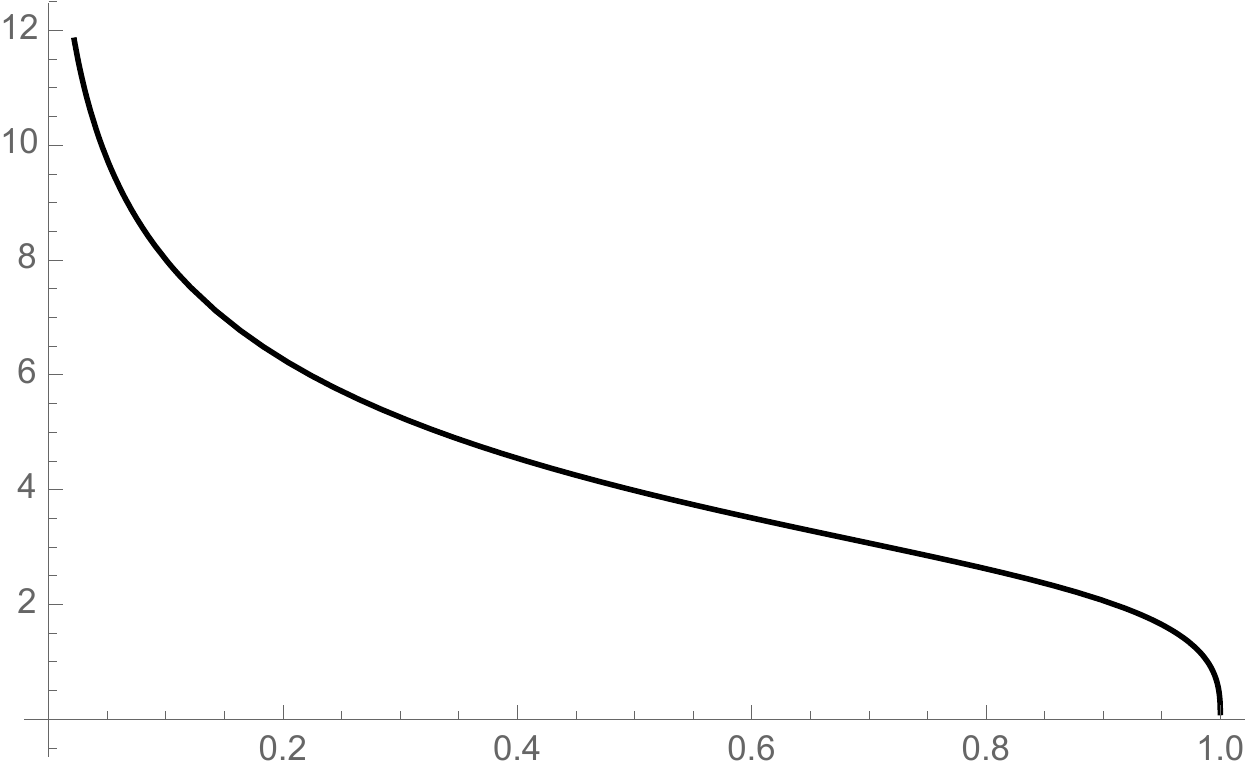}
\caption{\label{F1} The period $T_a$ of the solution of~\eqref{Eqns:Ham} with initial datum $X(0)=a\in(0,1)$ and $Y(0)=0$ as a function of $a$ for $p=3$ and $q=5$. We observe that $\lim_{a\to0}T_a=+\infty$ and $\lim_{a\to1}T_a=0$.}
\end{center}
\end{figure}

With the same change of variables $X=f_a(r)$, we can also compute
\begin{align*}
&\int_0^{T_a}|f'|^p\,dr=2\int_a^{b(a)}\(\frac p{p-1}\,\Big(V(a)-V\big(X\big)\Big)\)^{1-\frac1p}\,dX\,,\\
&\int_0^{T_a}|f|^p\,dr=\int_a^{b(a)}X^p\(\frac p{p-1}\,\Big(V(a)-V\big(X\big)\Big)\)^{-\frac1p}\,dX\,,\\
&\int_0^{T_a}|f|^q\,dr=\int_a^{b(a)}X^q\(\frac p{p-1}\,\Big(V(a)-V\big(X\big)\Big)\)^{-\frac1p}\,dX\,.
\end{align*}
Using Proposition~\ref{Prop:Equivalence}, we can obtain the plot $(\lambda,\mu(\lambda))$ as a curve parametrized by $a\in(0,1)$. See Fig.~\ref{F3}.  Similar results also holds in the case $p>2$ and $p-1<q<p$.

\begin{figure}[ht]
\begin{center}
\includegraphics[width=5cm,height=5cm]{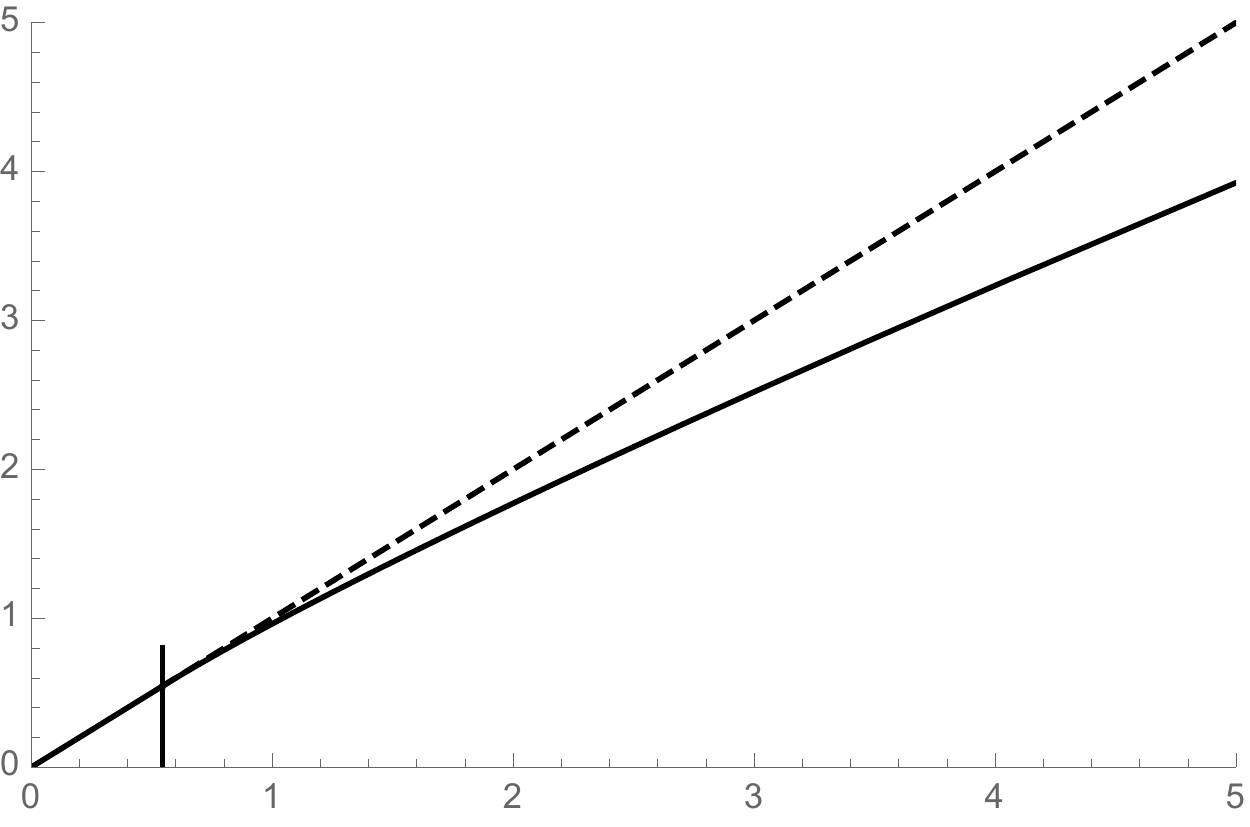}\hspace*{0.5cm}\includegraphics[width=5cm,height=5cm]{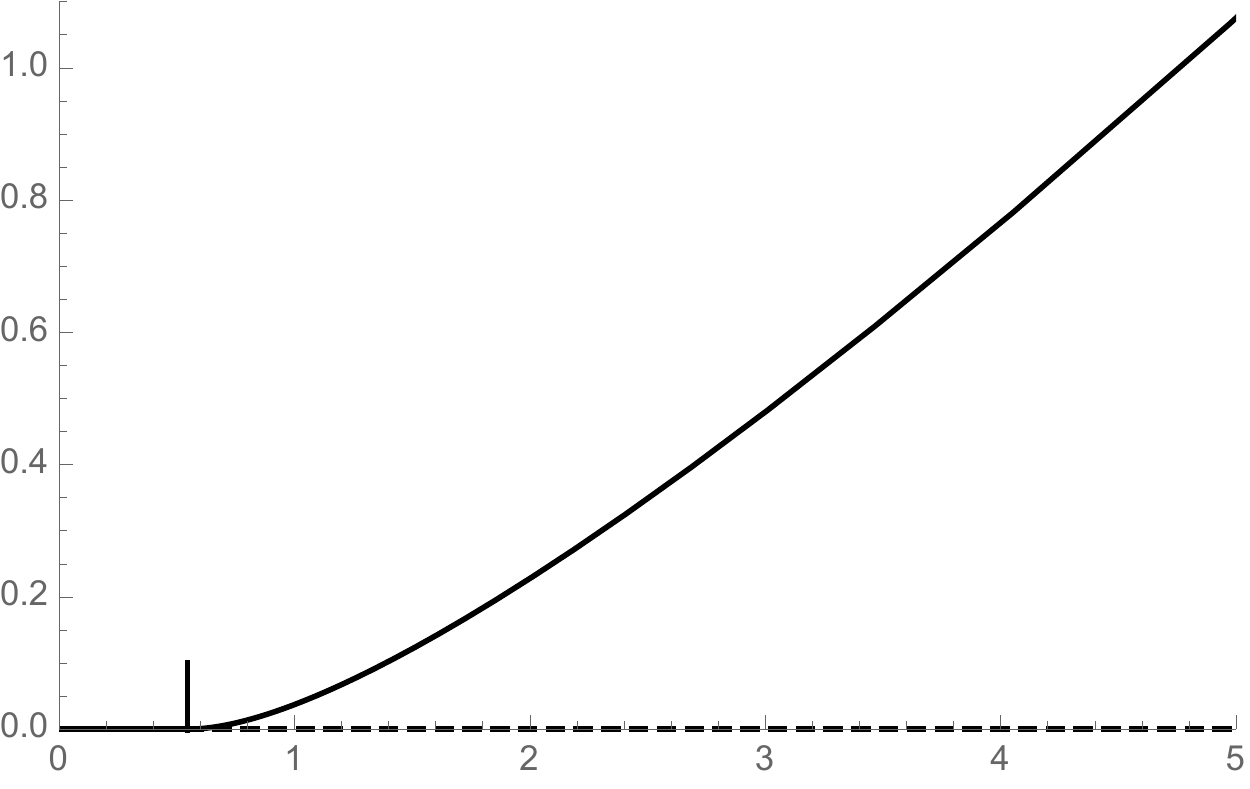}
\caption{\label{F3} Left: the branch $\lambda\mapsto\mu(\lambda)$ for $p=3$ and $q=5$. Right:the curve $\lambda\mapsto\lambda-\mu(\lambda)$. In both cases, the bifurcation point $\lambda=\lambda_1^\star$ is shown by a vertical line.}
\end{center}
\end{figure}

\section{Concluding remarks and open questions}\label{Sec:MoreOpen}

A major difference with the case $p=2$ is that the $1$-homogenous $p$-Laplacian flow~\eqref{flow} involves a nonlocal term, for homogeneity reasons. This is new and related with the fact that inequality~\eqref{Interp} is $2$-homogenous. To get rid of this constraint, one should consider inequalities with a different homogeneity, but then one would be in trouble when Taylor expanding at order two around the constants, and the framework should then be entirely different. Instead of using a $1$-homogenous flow, one could use a non $1$-homogenous flow as in~\cite{DEKL}, but one cannot expect that this would significantly remove the most important difficulty, namely that $\lambda_1^\star$ is a natural threshold for the perturbation of the constants.

In Lemma~\ref{Lem:CS}, we cannot replace $\lambda_1$ by $\lambda_1^\star$, as it is shown in the Appendix: see the discussion of the optimal constant in~\eqref{Ineq:2.3}. On the other hand, in the computation of $\mathsf i'$ in Section~\ref{Sec:Parabolic}, the term that we drop: $\iS{u^{-p}\,|u'|^{2p}}$, is definitely of lower order in the asymptotic regime as $t\to+\infty$. Actually, in~\eqref{Ineq:2.3}, if we consider $u=1+\varepsilon\,v$ and investigate the limit as $\varepsilon\to0_+$, it is clear that the inequality of Lemma~\ref{Lem:CS} degenerates into the Poincar\'e-Wirtinger inequality
\[
\iS{(\Lp v)^2}\ge\lambda_1\,\nrmS{v'}p^{2(p-1)}\quad\forall\,v\in\mathrm W^{2,p}(\S^1)\,,
\]
where $\lambda_1$ \emph{is not} the optimal constant, as can be check by writing the Euler-Lagrange equation for an optimal $w=|v'|^{p-2}\,v$. Altogether, it does not mean that one cannot prove that the optimal constant $\Lambda_{p,q}$ in Theorem~\ref{Thm:Main1} is equal to $\lambda_1^\star$ using the carr\'e du champ method, but if this can be done, it is going to be more subtle than the usual cases of application of this technique.

Finally let us point that it is a very natural and open question to ask if there is an analogue of the Poincar\'e estimate of Lemma~\ref{Lem:CS} if $p\in(1,2)$.  If yes, then we would also have an analogue of Theorem~\ref{Thm:Main1} with $1<p<2$. Notice that this issue is not covered in~\cite{MR1151712}.

\begin{center}\rule{2cm}{0.5pt}\end{center}
\appendix
\section{Considerations on some inequalities of interest}\label{Sec:Appendix:Inequalities}

We assume that $p>1$. In this appendix we collect some observations on the various inequalities which appear in this paper and how the corresponding optimal constants are related to each other.

\subsubsection*{$\bullet$ Spectral gap associated with $\Lp$} On $\S^1$, $\mathcal L_p^{-1}\kern1pt(0)$ is generated by the constant functions and there is a spectral gap, so that we have the Poincar\'e inequality
\be{Ineq1}
\iS{|u'|^p}\ge\Lambda_1\iS{|u|^p}\quad\forall\,u\in\mathrm W^{1,p}(\S^1)\quad\mbox{such that}\quad\iS{|u|^{p-2}\,u}=0\,.
\ee
The optimal constant $\Lambda_1$ is characterized by solving the Cauchy problem
\[
\Lp u+|u|^{p-2}\,u=0\quad\mbox{with}\quad u(0)=1\quad\mbox{and}\quad u'(0)=0
\]
and performing the appropriate scaling so that the solution is $2\pi$-periodic, as follows. Let $\phi_p(s)=|s|^p/p$ and denote by $p'=p/(p-1)$ the conjugate exponent. Since the ODE can be rewritten as: $\big(\phi_p'(u')\big)'+\phi_p'(u)=0$, we can introduce $v=\phi_p'(u')$ and observe that $(u,v)$ solves the system
\[
u'=\phi_{p'}'(v)\,,\quad v'=-\,\phi_p'(u)\,,\quad u(0)=1\,,\quad v(0)=0\,.
\]
The Hamiltonian energy $\phi_p(u)+\phi_{p'}(v)=1/p$ is conserved and a simple phase plane analysis shows that the solution is periodic, with a period which depends on $p$ and is sometimes denoted by $2\pi_p$ in the literature. Then the function $x\mapsto f_p(x):=u(\pi_p\,x/\pi)$ is $2\pi$-periodic and solves
\[
\Lp f_p+\Lambda_1\,|f_p|^{p-2}\,f_p=0\quad\mbox{with}\quad\Lambda_1=\(\frac\pi{\pi_p}\)^p\,.
\]
See~\cite{MR1785676,MR1735181} for more results on $\Lambda_1$ and related issues.

For any function $u\in\mathrm W^{1,p}(\S^1)$, $t\mapsto\iS{|t+u|^p}$ is a convex function which achieves its minimum at $t=0$ if $\iS{|u|^{p-2}\,u}=0$. As a consequence, we get that
\[
\Lambda_1=\min_{u\in\mathrm W^{1,p}(\S^1)}\max_{t\in\R}\frac{\nrmS{u'}p^p}{\nrmS{t+u}p^p}
\]
and
\[
\lambda_1=\inf_{v\in\mathcal W_1}\frac{\nrmS{v'}p^2}{\nrmS vp^2}\le\Lambda_1^\frac2p\,.
\]
On the other hand, since the optimal function $f_p$ is in $\mathcal W_1$, we have that
\[
\lambda_1=\Lambda_1^\frac2p\,.
\]
Alternatively, $\lambda_1$ is the optimal constant in the inequality
\be{Ineq2}
\nrmS{v'}p^2\ge\lambda_1\,\nrmS vp^2\quad\forall\,v\in\mathcal W_1\,.
\ee
Notice that the zero average condition $\iS{|u|^{p-2}\,u}=0$ in~\eqref{Ineq1} differs from the condition $\iS v=0$ in~\eqref{Ineq2}, but that the two inequalities share the same optimal functions.

\subsubsection*{$\bullet$ The inequality on $\mathrm L^2(\S^1)$} Here we consider the inequality
\be{Ineq3}
\nrmS{v'}p^2\ge\lambda_1^\star\,\nrmS v2^2\quad\forall\,v\in\mathcal W_1\,,
\ee
with optimal constant $\lambda_1^\star$. Since $d\sigma$ is a probability measure, then $\nrmS up^2-\nrmS u2^2$ has the same sign as $(p-2)$ and we have equality if and only if $u$ is constant, so that, for any $p\neq2$, we have $(p-2)\,\big(\lambda_1^\star-\lambda_1\big)\ge0$ as already noted in the introduction. If $p=2$, we have of course $\lambda_1^\star=\lambda_1$ as the two inequalities coincide. If $p\neq2$, one can characterize $\lambda_1^\star$ by solving the Cauchy problem
\[
\Lp u+u=0\quad\mbox{with}\quad u(0)=1\quad\mbox{and}\quad u'(0)=0
\]
and performing the appropriate scaling so that the solution is $2\pi$-periodic, as it has been done above for $\Lambda_1$. We can also introduce $v=\phi_p'(u')$ and observe that $(u,v)$ solves the system
\[
u'=\phi_{p'}'(v)\,,\quad v'=-\,u\,,\quad u(0)=1\,,\quad v(0)=0\,,
\]
so that trajectories differ form the ones associated $f_p$. This proves that $\lambda_1^\star\neq\lambda_1$ if $p\neq2$. See Appendix~\ref{Sec:lambda1andlambda1star} for further details.

\subsubsection*{$\bullet$ A more advanced interpolation inequality} In the proof Lemma~\ref{Lem:CS}, we establish on $\mathrm W^{1,p}(\S^1)$ the inequality
\be{Ineq4}
\nrmS{u'}p^2\,\nrmS up^{p-2}\ge\lambda_1\iS{|u|^{p-2}\,v^2}\quad\mbox{with}\quad v=u-\bar u\,,\quad\bar u=\iS u
\ee
in the case $p>2$. This inequality is optimal because equality is achieved by $u=f_p$. We can in principle consider the inequality
\[
\nrmS{u'}p^2\,\nrmS up^{p-2}-\mu_1\iS{|u|^{p-2}\,v^2}\quad\forall\,u\in\mathrm W^{1,p}(\S^1)
\]
with optimal constant $\mu_1$ and $v=u-\bar u$, for some appropriate notion of average~$\bar u$ which is not necessarily given by $\bar u=\iS u$. With the standard definition of~$\bar u$, we have shown in Lemma~\ref{Lem:CS} that $\mu_1=\lambda_1$. Any improvement on the estimate of~$\mu_1$ (with an appropriate orthogonality condition), \emph{i.e.}, a condition such that~$f_p$ is not optimal and $\mu_1>\lambda_1$, would automatically provide us with the improved estimate
\[
\Lambda_{p,q}\ge\mu_1
\]
in Theorem~\ref{Thm:Main1}. As a consequence of Theorem~\ref{Thm:Main1}, we know anyway that $\mu_1\le\lambda_1^\star$.

Inspired by the considerations on $\Lambda_1$, let us define
\[
\mu_1=\min_{u\in\mathrm W^{1,p}(\S^1)}\max_{t\in\R}\frac{\nrmS{u'}p^2\,\nrmS up^{p-2}}{\iS{|u|^{p-2}\,|u-t|^2}}\,.
\]
An elementary optimization on $t$ shows that the optimal value is $t=\bar u_p$ with
\[
\bar u_p:=\frac{\iS{|u|^{p-2}\,u}}{\iS{|u|^{p-2}}}\,.
\]
By considering again $f_p$, we see that actually $\mu_1=\lambda_1$, which proves the inequality
\be{Ineq5}
\nrmS{u'}p^2\,\nrmS up^{p-2}\ge\lambda_1\iS{|u|^{p-2}\,|u-\bar u_p|^2}\quad\forall\,u\in\mathrm W^{1,p}(\S^1)
\ee
for an arbitrary $p>2$. As a consequence of~\eqref{Ineq5}, we recover~\eqref{Ineq:2.3}. By keeping track of the equality case in the proof, we obtain that $f_p$ realizes the equality in~\eqref{Ineq:2.3}, which  proves again that the constant $\lambda_1$ is optimal in~\eqref{Ineq:2.3}.

\section{Computation of the constants \texorpdfstring{$\lambda_1$ and $\lambda_1^\star$}{lambda1 and lambda1star}}\label{Sec:lambda1andlambda1star}

Any critical point associated with $\lambda_1$ solves
\[
-\,\Lp u=\lambda_1^\frac p2\,|u|^{p-2}\,u\quad\mbox{on}\quad\S^1\approx[0,2\pi)\,.
\]
The function $f$ such that
\[
u(x)=f\(\frac{T_1\,x}{2\,\pi}\)\quad x\in[0,T_1)\,,\quad\(\frac{T_1}{2\,\pi}\)^p=\lambda_1^\frac p2
\]
is a $T_1$-periodic solution of
\[
-\,\Lp f=|f|^{p-2}\,f\,.
\]
Moreover, by homogeneity and translation invariance, we can assume that $f(0)=1$ and $f'(0)=0$. An analysis in the phase space shows that $f$ has symmetry properties and that the energy is conserved and such that $(p-1)\,|f'|^p+|f|^p=1$, so that
\[
T_1=4\int_0^1\(\frac{p-1}{1-X^p}\)^\frac1p\,dX\,.
\]
Hence we conclude that
\[\label{lambda1Num}
\lambda_1=\(\frac2\pi\int_0^1\(\frac{p-1}{1-X^p}\)^\frac1p\,dX\)^{\!2}\,.
\]

Similarly, a critical point associated with $\lambda_1^\star$ solves
\[
-\,\nrmS{u'}p^{2-p}\,\Lp u=\lambda_1^\star\,u\quad\mbox{on}\quad\S^1\approx[0,2\pi)\,.
\]
With no loss of generality, by homogeneity we can assume that $\nrmS{u'}p^{2-p}=\lambda_1^\star$ so that $u$ can be considered as $2\pi$-periodic solution on $\R$ of $-\,\Lp u=u$. By translation invariance, we can also assume that $u'(0)=0$ but the value of $u(0)=a>0$ is unknown. The function $f$ such that
\[
u(x)=a\,f\(a^{\frac2p-1}\,x\)
\]
is still a periodic solution of
\[
-\,\Lp f=f\,,
\]
with now $f(0)=1$ and $f'(0)=0$, of period
\[
T_1^\star=2\,\pi\,a^{\frac2p-1}\,.
\]
The energy $\frac 2p\,(p-1)\,|f'|^p+|f|^2=1$ is conserved, so that
\[
T_1^\star=4\int_0^1\(\frac2p\,\frac{p-1}{1-X^2}\)^\frac1p\,dX\quad\mbox{and}\quad a^{\frac2p-1}=\frac2\pi\int_0^1\(\frac2p\,\frac{p-1}{1-X^2}\)^\frac1p\,dX\,.
\]
By computing
\[
\nrmS{u'}p^p=4\,a^{3-\frac2p}\int_0^{T_1^\star/4}|f'|^p\,\frac{dx}{2\,\pi}
=\frac2\pi\,a^{3-\frac2p}\int_0^1\(\frac2p\,\frac{p-1}{1-X^2}\)^{\frac1p-1}\,dX\,,
\]
we obtain that
\[
\lambda_1^\star=\nrmS{u'}p^{2-p}=\(\frac2\pi\int_0^1\(\frac2p\,\frac{p-1}{1-X^2}\)^{\frac1p-1}\,dX\)^{\!\frac2p-1}\(\frac2\pi\int_0^1\(\frac2p\,\frac{p-1}{1-X^2}\)^\frac1p\)^{3-\frac2p}\,.
\]

\begin{figure}[ht]
\begin{center}
\includegraphics[width=8cm,height=5cm]{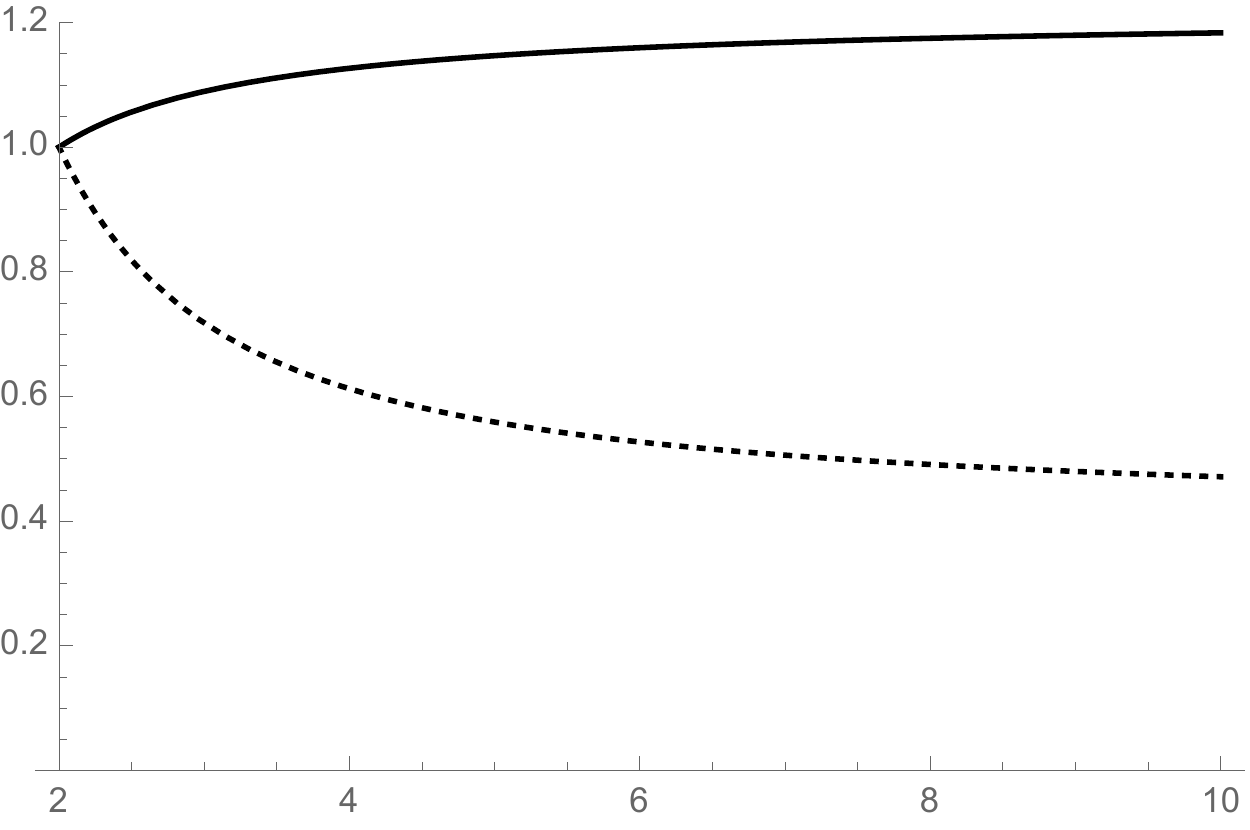}
\caption{\label{F2} The curves $p\mapsto\lambda_1$ (dotted) and $p\mapsto\lambda_1^\star$ (plain) differ.}
\end{center}
\end{figure}

\medskip\noindent{\bf Acknowledgment:} J.D.~has been partially supported by the Project EFI (ANR-17-CE40-0030) of the French National Research Agency (ANR), and also acknowledges support from the Prefalc project CFRRMA. M.G-H and R.M. have been supported by Fondecyt grant 1160540.


\begin{center}\rule{2cm}{0.5pt}\end{center}
\end{document}